\numberwithin{figure}{section} 
\newcommand{\field}[1]{\mathbb{#1}} 
\newcommand{\N}{\field{N}}
\newcommand{\Z}{\field{Z}} 
\newcommand{\R}{\field{R}}
\newcommand{\C}{\field{C}}
\newcommand{\Q}{\field{Q}}
\newcommand{\sgn}{\operatorname{sgn}}
\newcommand{\SL}{\operatorname{SL}}
\newcommand{\Res}{\operatorname{Res}}
\renewcommand{\H}{\mathbb{H}}
\newcommand{\wt}{\kappa}
\newcommand{\SSnew}{\mathbb{S}}
\numberwithin{equation}{section}
\newtheorem{theorem}{\textbf{Theorem}}
\numberwithin{theorem}{section}
\newtheorem{corollary}[theorem]{\textbf{Corollary}}
\newtheorem{lemma}[theorem]{\textbf{Lemma}}
\newtheorem{proposition}[theorem]{\textbf{Proposition}}
\theoremstyle{remark}
\newtheorem*{remark}{Remark}
\newtheorem*{remarks}{Remarks}
\renewenvironment{proof}[1][Proof]{\begin{trivlist}
\item[\hskip \labelsep {\bfseries #1:}]}{\qed\end{trivlist}}
\newcommand{\bea}{\begin{eqnarray}} 
\newcommand{\eea}{\end{eqnarray}} 
\newcommand{\be}{\begin{equation}} 
\newcommand{\ee}{\end{equation}} 
\newcommand{\benn}{\begin{equation*}} 
\newcommand{\eenn}{\end{equation*}} 
\newcommand{\arctanh}{\operatorname{artanh}}
\theoremstyle{definition}
\newtheorem*{definition}{\textbf{Definition}}
\title[]{Polar harmonic Maass forms and their applications}
\author{Kathrin Bringmann} 
\address{Mathematical Institute\\University of
Cologne\\ Weyertal 86-90 \\ 50931 Cologne \\Germany}
\email{kbringma@math.uni-koeln.de}
\author{Ben Kane}
\address{Department of Mathematics\\ University of Hong Kong\\ Pokfulam, Hong Kong}
\email{bkane@maths.hku.hk}
\date{\today}
\thanks{ The research of the first author was supported by the Alfried Krupp Prize for Young University Teachers of the Krupp foundation and the research leading to these results has received funding from the European Research Council under the European Union's Seventh Framework Programme (FP/2007-2013) / ERC Grant agreement n. 335220 - AQSER.  The research of the second author was supported by grant project numbers 27300314 and 17302515 of the Research Grants Council.}
\subjclass[2010] {11F03, 11F11, 11F12, 11F25, 11F30, 11F37}
\keywords{Fourier coefficients, height functions on divisors, higher Green's functions, inner products, meromorphic modular forms, polar harmonic Maass forms}
\begin{document}
\begin{abstract}
In this survey, we present recent results of the authors about non-meromorphic modular objects known as polar harmonic Maass forms.  These include the computation of Fourier coefficients of meromorphic modular forms and relations between inner products of meromorphic modular forms and higher Green's functions evaluated at CM-points. 
\end{abstract}

\maketitle

\section{Introduction}

While investigating the Doi-Naganuma lift, Zagier \cite{ZagierRQ} encountered interesting weight $2k$ cusp forms ($z\in\mathbb{H}$, $k\in \N_{\geq 2}$, $\delta\in\N$)
\begin{equation}\label{Zagiercusp}
f_{k,\delta}(z):=\sum_{\mathcal{Q}\in \mathcal{Q}_{\delta}} \mathcal{Q}(z,1)^{-k},
\end{equation}
where $\mathcal{Q}_{\delta}$ is the set of integral binary quadratic forms of discriminant $\delta$.  These cusp forms were then used by Kohnen and Zagier \cite{KohnenZagier} to construct a kernel function for the Shimura and Shintani lifts between integral and half-integral weight cusp forms.  Integrating the resulting theta function against half-integral weight (parabolic) Poincar\'e series then yields the functions $f_{k,\delta}$ as theta lifts.  Kramer \cite{Kramer} showed that related cusp forms $f_{k,\delta,[Q]}$, defined by restricting the sum in \eqref{Zagiercusp} to those $\mathcal{Q}$ in a fixed $\SL_2(\Z)$-equivalence class $[Q]$, span the space of weight $2k$ cusp forms.  These $f_{k,\delta,[Q]}$ may also be viewed as (hyperbolic) Poincar\'e series which appeared in \cite{Pe3}; a good overview of these Poincar\'e series and their connection to $
f_{k,\delta,[Q]}
$ may be found in \cite{ImOs}. 

Choosing the discriminant to be negative instead, Bengoechea \cite{Bengoechea}, in her thesis, constructed meromorphic modular forms with poles at CM-points.  To describe these, for a positive-definite integral binary quadratic form $Q$ of discriminant $-D<0$, define
\begin{equation*}
f_Q(z)=f_{k,-D,[Q]}(z):=D^{\frac{k}{2}} \sum_{\mathcal{Q}\in [Q]}\mathcal{Q}(z,1)^{-k}.
\end{equation*}

The $f_Q$ are {\it meromorphic cusp forms}, i.e., meromorphic modular forms which vanish towards infinity like cusp forms. Given the connection between \eqref{Zagiercusp} and theta lifts, von Pippich and  the authors \cite{BKCycle} investigated the interrelation between theta lifts and the sum
$$
f_{k,-D}:=\sum_{Q\in \mathcal{Q}_{-D}/\SL_2(\Z)} f_{k,-D,[Q]}
$$
over all equivalence classes of (not necessarily positive-definite) $\mathcal{Q}_{-D}/\SL_2(\Z)$.  Here a regularized theta lift is required.  In Theorem \ref{thm:liftfkd}, we see that the singularities at the cusps are turned into singularities in the upper half-plane under this lift.  A related construction of certain 
non-meromorphic modular forms yield analogous results.  To state these, one defines weight $2-2k$ non-meromorphic modular forms  
 \begin{equation}\label{eqn:Gdef}
\mathcal{G}_{Q}(z)=\mathcal{G}_{1-k,-D,[Q]}(z):=D^{\frac{1-k}{2}}\sum_{\mathcal{Q}\in [Q]} \mathcal{Q}(z,1)^{k-1}\int_0^{\arctanh\left(\frac{\sqrt{D}}{\mathcal{Q}_{z}}\right)} \sinh^{2k-2}(\theta) d\theta,
\end{equation}
where for $\mathcal{Q}=[a,b,c]$ and $z=x+iy$
$$
\mathcal{Q}_{z}:=y^{-1}\left(a|z|^2+bx+c\right).
$$
There
 are analogous so-called locally harmonic Maass forms which occurred in work of H\"ovel \cite{Hoevel} for weight $0$ and in work of Kohnen and the authors \cite{BKW} for negative weight.

The functions $G_Q$ also show up as outputs of a theta lift (see Theorem \ref{thm:thetalift}) with inputs half-integral weight harmonic Maass forms with non-holomorphic principal parts at $i\infty$.  Furthermore, the associated theta kernel is closely related to that used to obtain $f_{k,-D}$,  and the analogous theta kernel for the omitted case $k=1$, which appeared in \cite{Hoevel}, was used by Alfes, Griffin, Ono, and Rolen \cite{AGOR} to construct a lift in the other direction, from integral to half-integral weight.

  The functions $\mathcal{G}_Q$ are also related to $f_Q$ via two differential operators from the theory of harmonic Maass forms (see Theorem \ref{thm:Gpolar}), which implies that they are what is known as polar harmonic Maass forms, the central objects in this article (see Section \ref{sec:polar} for the definition).  In addition,  the functions $\mathcal{G}_Q$ reappear when taking inner products of $f_Q$ against meromorphic cusp forms (see Theorem \ref{thm:innersimple}). Moreover, it turns out that the inner product of two of these meromorphic cusp forms agrees with the evaluation of higher Green's functions at CM-points (see Theorem \ref{thm:innerGreens}).  This also yields an identity between these inner products and heights of CM-cycles  (see Corollary \ref{cor:innerheight}).
	
We next turn to Fourier coefficients of meromorphic forms.  Before describing these, we first recall what is known for weakly holomorphic modular forms.  In work which gave birth to the Circle Method, Hardy and Ramanujan \cite{HR1, HR2} derived their famous asymptotic formula for the partition function $p(n)$ (see \eqref{eqn:partitiongrowth}).  Rademacher \cite{Rad} later perfected the method to derive an exact formula (see \eqref{Radformula}).  A key ingredient of the proof of \eqref{Radformula} is the fact that the partition generating function is essentially the reciprocal of a modular form with no roots in the upper half-plane, but which vanishes at the cusp $i\infty$ instead.  

Using modern techniques, a new formula for $p(n)$ as a (finite) trace of a certain weak Maass form evaluated at CM points of discriminant $1-24n$ modulo the action of $\Gamma_0(6)$ was recently proven by Bruinier and Ono \cite{BruinierOno}.  Much in the same way that the sum \eqref{Radformula} restricted to $k\ll \sqrt{n}$ gives a very good asymptotic approximation to $p(n)$, Masri \cite{Masri} used Bruinier and Ono's result to obtain a good asymptotic approximation to $p(n)$ with a shorter sum. The results above are not limited to the partition function and rather follow from a general structure for harmonic Maass forms.  Returning to Rademacher's formula \eqref{Radformula}, one sees that this is also part of a much more generic family of identities.  Rademacher and Zuckerman \cite{RZ, Zu1, Zu2} generalized \eqref{Radformula} to obtain exact formulas for the coefficients of all weakly holomorphic modular forms of negative weight.  Their formulas are explicit in the sense that one only requires the principal part of a given weakly holomorphic form.  
Conversely, one may ask whether one can detect modularity of a given Fourier expansion merely by showing that it has the same shape of Rademacher and Zuckerman's.  However, Knopp \cite{Knopp} determined that this was insufficient.  Viewed in a modern setting, Knopp found examples of such expansions which were only the holomorphic parts of harmonic Maass forms.  Moreover, using 
a basis \eqref{eqn:calFdef} of harmonic Maass forms with the simplest principal parts, one can show that the holomorphic parts of Fourier coefficients of all harmonic Maass forms have the same shape as Rademacher and Zuckerman's expansions \cite{BO2}.  The realization of the mock theta functions, mysterious functions introduced in Ramanujan's last letter to Hardy, as holomorphic parts of harmonic Maass forms by Zwegers \cite{Zwegers} also implies that the coefficients of these have the same shape.  This approach was used in \cite{BOMockTheta} to prove the Andrews--Dragonette conjecture \cite{Andrews,Dragonette} about one of Ramanujan's mock theta functions.

Until recently much less was known about Fourier coefficients of meromorphic cusp forms.  Hardy and Ramanujan \cite{HR3} considered the special case where the form has a unique simple pole modulo the action of $\SL_2(\Z)$.  In particular, they found a formula for the reciprocal of the weight $6$ Eisenstein series $E_6$.  Ramanujan \cite{RaLost} then stated further formulas for other meromorphic functions, such as the reciprocal for $E_4$ (see \eqref{eqn:1/E_4} for his formula), but, as usual for his writing, did not provide a proof. His claims concerning meromorphic cusp forms with simple poles were then subsequently proven by Bialek in his Ph.D. thesis written under Berndt \cite{Bi}.  Berndt, Bialek, and Yee \cite{BeBiYe} were first to explicitly compute the Fourier coefficients of meromorphic cusp forms with second-order poles, resolving the last of Ramanujan's claims about the coefficients of meromorphic cusp forms. The investigations in \cite{Bi,BeBiYe,HR3} all used the Circle Method, but the calculations become exceedingly more difficult as the order of the poles increase.  
More recently, Sebbar and Sebbar \cite{SebbarSebbar} related these meromorphic cusp forms with simple poles to equivariant functions and obtained the Fourier expansion in another form.  An alternate approach of Petersson \cite{Pe1} 
employs Poincar\'e series to yield formulas resembling those of Hardy and Ramanujan if the poles are all simple.  Combining this idea  with the construction of new Poincar\'e series and embedding the problem into the framework of polar harmonic Maass forms, we see in Theorem \ref{Fouriercoefficients} that the Fourier coefficients of all meromorphic cusp forms indeed have the same shape. For example, if the only pole in $\SL_2(\Z)\backslash\H$ is at an elliptic fixed point, then the coefficients may be written as a series over certain ideals (see \eqref{Fklr}).  This result is explicit in the sense that the Fourier expansion is directly given for a set of basis elements of a certain subspace of polar harmonic Maass forms.  
One may again obtain a more general formula for the holomorphic parts of polar harmonic Maass forms and the expansion can once more 
 be given by determining the principal part of the form.  We also note that some study has been done on congruences for meromorphic modular forms; for example, see the work of Honda and Kaneko \cite{HondaKaneko}.  It may be interesting to interpret and possibly generalize these results in the setting of polar harmonic Maass forms.

One may wonder why polar harmonic Maass forms are employed if the ultimate goal is the Fourier coefficients of meromorphic modular forms.  One of the many reasons is that polar harmonic Maass forms are simpler than meromorphic modular forms to construct because  forms with arbitrary principal parts exist.   If the principal parts satisfy a certain condition dictated by the Riemann--Roch Theorem, then the polar harmonic Maass form is necessarily a meromorphic modular form.  In the reverse direction, one may also obtain an explicit construction of all meromorphic modular forms by determining when a given polar harmonic Maass form is meromorphic.  By investigating the Fourier coefficients of the basis elements, we obtain the Fourier coefficients of every meromorphic modular form in particular.  
More specifically, given only the principal parts of a form one may write it as a linear combination of Poincar\'e series built by Fay \cite{Fay} 
with easily-determined principal parts.
Moreover, these 
are preimages of the elliptic Poincar\'e series under natural differential operators (see Theorem \ref{thm:Poincmain}).  To obtain the coefficients in the style of Hardy and Ramanujan's expansion for $1/E_6$, 
an alternate representation of these Poincar\'e series 
is given in Lemma \ref{lem:fsplit}.  

The paper is organized as follows.  In Section \ref{sec:polar}, we introduce polar harmonic Maass forms and their Fourier and elliptic expansions as well as the connection between these expansions and a pairing between weight $2-2k$ polar harmonic Maass forms and weight $2k$ cusp forms.  In Section \ref{sec:Poincare}, we recall some properties of known Poincar\'e series.  In Section \ref{sec:PeterssonSplit}, a splitting of elliptic Poincar\'e series is explained by placing them into the framework of polar harmonic Maass forms, and certain bases are also constructed.  We give the Fourier expansions of polar harmonic Maass forms, and therefore also meromorphic modular forms, in Section \ref{sec:Fourier}.  The connections between $f_Q$ and $\mathcal{G}_Q$ are investigated in Section \ref{sec:cycle}.  We finish the paper with some discussion about possible future directions in Section \ref{sec:future}.

\section{Polar harmonic Maass forms}\label{sec:polar}

\subsection{Basic definitions}\label{2.1}
For $M=\left(\begin{smallmatrix}a &b\\c &d\end{smallmatrix}\right)\in\SL_2(\Z)$ and a function $f\colon \H\to\C$, we require the weight $\wt\in\frac{1}{2}\Z$ slash action
$$
f|_{\wt}M(z):= \begin{cases}
                        (cz +d)^{-\kappa} f(M z) & \text{ if } \kappa \in \Z,\\
                        \left(\frac{c}{d}\right)^{-2\kappa} \varepsilon_d^{2\kappa}(cz +d)^{-\kappa} f(M z) & \text{ if } \kappa \in \frac{1}{2}\Z \backslash \Z.
                       \end{cases}
$$
Here $(\frac{\cdot}{\cdot})$ denotes the Kronecker symbol and 
$$
\varepsilon_d:=\begin{cases} 1 &\text{if }d\equiv 1\pmod{4},\\ i&\text{if }d\equiv 3\pmod{4}.\end{cases}
$$
\begin{definition}
For $N\in\N$ and $
1\neq \kappa \in\frac{1}{2}\Z
$, a \begin{it}polar harmonic Maass form\end{it} of weight $\kappa$ on $\Gamma_0(N)$ is a function $\mathcal{F}:\H\to\C$ which is real analytic outside of a discrete set of points and satisfies the following conditions: 
\noindent

\noindent
\begin{enumerate}[leftmargin=*]
\item
For every $M\in\Gamma_0(N)$, we have $\mathcal{F}|_{\kappa}M=\mathcal{F}$. 
\item 
The function $\mathcal{F}$ is annihilated by the \begin{it}weight $\kappa$ hyperbolic Laplacian\end{it} 
$$
\Delta_{\kappa}:=-y^2\left(\frac{\partial^2}{\partial x^2}+\frac{\partial^2}{\partial y^2}\right)+i\kappa y\left(\frac{\partial}{\partial x}+i\frac{\partial}{\partial y}\right).
$$
\item For every $\mathfrak{z}\in \H$, there exists $n\in\N_0$ such that $(z-\mathfrak{z})^n\mathcal{F}(z)$ is bounded in some neighborhood of $\mathfrak{z}$.
\item The function $\mathcal{F}$ grows at most linear exponentially towards cusps of $\Gamma_0(N)$.
\end{enumerate}
If one allows a general eigenvalue in (2), then one obtains polar Maass forms.  If $\mathcal{F}$ does not have any singularities in $\H$, i.e., if $n=0$ in condition (3) for every $\mathfrak{z}\in\H$, then $\mathcal{F}$ is a \begin{it}harmonic Maass form\end{it}; such functions were first introduced in Section 3 of \cite{BF}.  
Using a slight variant of the operator $\Delta_{\kappa}$, Fay \cite{Fay} also studied Maass forms, although not specializing on the harmonic case.

\end{definition}

We denote the space of polar harmonic Maass forms by $\mathcal{H}_{\kappa}(N)$ and those whose singularities in $\H$ are all poles and which are bounded towards all cusps by $\H_{\kappa}(N)$.  
We furthermore let $H_{\kappa}(N)$ 
be the space of harmonic Maass forms and let $M_{\kappa}^!(N)$ stand for  the subspace of \begin{it}weakly holomorphic modular forms\end{it}, 
 those meromorphic modular forms whose poles (if any) are supported at cusps.  The subspace of meromorphic modular forms we further denote by  $\mathbb{M}_{\kappa}(N)$ and use the notation $\SSnew_{\kappa}(N)$ for the subspace of {\it meromorphic cusp forms}, i.e., those meromorphic modular forms $f$ for which $y^{\frac{\kappa}{2}}|f(z)|$ decays towards all cusps.  As usual, the space of cusp forms is $S_{\kappa}(N)$.  Throughout, we omit $N$ if $N=1$ and $\kappa\in\Z$ or $N=4$ and $\kappa\in\frac{1}{2}\Z\setminus\Z$.  

\par
An important subspace of $\mathcal{H}_{\kappa}(N)$ is obtained by noting that the hyperbolic Laplacian splits as
\begin{equation}\label{eqn:Deltasplit}
\Delta_\kappa=-\xi_{2-\kappa}\circ \xi_\kappa,
\end{equation}
where $\xi_{\kappa}:=2iy^{\kappa} \overline{\frac{\partial}{\partial \overline{z}}}$.  If $\mathcal{F}$ satisfies weight $\kappa$ modularity, then $\xi_{\kappa}(\mathcal{F})$ is modular of weight $2-\kappa$ and one sees from the decomposition \eqref{eqn:Deltasplit} that $\xi_{\kappa}(\mathcal{F})\in \mathbb{M}_{2-\kappa}(N)$ if $\mathcal{F}\in \mathcal{H}_{\kappa}(N)$.  It is thus natural to consider the subspace $\mathcal{H}_{\kappa}^{\operatorname{cusp}}(N)\subseteq \mathcal{H}_{\kappa}(N)$ consisting of those $\mathcal{F}$ for which $\xi_{\kappa}(\mathcal{F})$ is a cusp form.  
We similarly apply the superscript ``cusp'' to denote the intersection of other subspaces with $\mathcal{H}_{\kappa}^{\operatorname{cusp}}(N)$, such as $H_{\kappa}^{\operatorname{cusp}}(N):=H_{\kappa}(N)\cap \mathcal{H}_{\kappa}^{\operatorname{cusp}}(N)$.  
If $g:=\xi_{\kappa}(\mathcal{F})\in S_{\kappa}(N)$, then there is another non-modular natural preimage of $g$, namely the \begin{it}non-holomorphic Eichler integral\end{it} (cf. \cite{ZagierBourbaki})
\begin{equation}\label{eqn:g^*def}
g^*(z):=(2i)^{1-\kappa}\int_{-\overline{z}}^{i\infty} g^{\operatorname{c}}(w) (w+z)^{1-\kappa} dw,
\end{equation}
where $g^c (w) := \overline{g(-\overline{w})}$. Then 
\begin{equation}\label{eqn:xig*}
\xi_{\kappa}(g^*)=g. 
\end{equation}
In addition to $\xi_{\kappa}$, if $\kappa\in-2\N_0$ there is another natural differential operator $\mathcal{D}^{1-\kappa}$ from $\mathcal{H}_{\kappa}(N)$ to $\mathbb{M}_{2-\kappa}(N)$, where $\mathcal{D}:=\frac{1}{2\pi i}\frac{\partial}{\partial z}$. 
An identity of Bol relates this operator to 
the \begin{it}raising operator\end{it} $R_{\kappa,z}:=2i\frac{\partial}{\partial z} + \kappa y^{-1}$.  
In particular, denoting by $R_{\kappa,z}^n:=R_{\kappa+2n-2,z}\circ\dots\circ R_{\kappa,z}$ repeated raising, $\mathcal{D}^{1-\kappa}$ is a constant multiple of $R_{\kappa}^{1-\kappa}$.

\subsection{Fourier and elliptic expansions}
Polar harmonic Maass have a natural decomposition into holomorphic and non-holomorphic parts. Both parts can contain singularities; the singularities in the holomorphic part are poles, while one can determine the kind of singularities in the non-holomorphic part by noting that its image under $\xi_{\kappa}$ is meromorphic.  To describe the principal parts of polar harmonic Maass forms, we require their Fourier expansion around $\varrho\in \mathcal{S}_N$, the set of inequivalent cusps of $\Gamma_0(N)$.  Suppose that the cusp width  of $\varrho$ is $\ell_{\varrho}$ and choose $M_{\varrho}$ such that $M_{\varrho}\varrho=i\infty$.  The Fourier expansion at $\varrho$ then has the shape (convergent for $y \gg 0$) 
$$
\mathcal{F}_{\varrho}(z):=\mathcal{F}\big|_{\kappa} M_{\varrho}(z)=\mathcal{F}_{\varrho}^+(z) + \mathcal{F}_{\varrho}^-(z),
$$
where, for some $c_{\mathcal{F},\varrho}^{\pm}(n)\in\C$, 
\begin{align}\nonumber
\mathcal{F}_{\varrho}^+(z)&:=\sum_{n\gg -\infty} c_{\mathcal{F},\varrho}^+(n) e^{\frac{2\pi i nz}{\ell_{\varrho}}},\\
\label{eqn:F-exp}
\mathcal{F}_{\varrho}^-(z)&:=c_{\mathcal{F},\varrho}^-(0) y^{1-\kappa}+\sum_{\substack{n\ll \infty\\ n\neq 0}} c_{\mathcal{F},\varrho}^-(n) \Gamma\left(1-\kappa,-\frac{4\pi n y}{\ell_{\varrho}}\right)e^{\frac{2\pi i nz}{\ell_{\varrho}}},
\end{align}
with the incomplete gamma function $\Gamma(j,v):=\int_{v}^{\infty} t^{j-1} e^{-t} dt$.
We call $\mathcal{F}_{\varrho}^+$ the
\begin{it}holomorphic part\end{it} of $F$ at $\varrho$ 
and $\mathcal{F}_{\varrho}^-$ the \begin{it}non-holomorphic part\end{it}. We may omit the dependence on $\mathcal{F}$ and $\varrho$ if it is clear from the context, 
while if $\mathcal{F}\in\mathbb{M}_{\kappa}(N)$, then we simply write $c_{\mathcal{F},\varrho}(n)$ instead of $c_{\mathcal{F},\varrho}^+(n)$. For $\mathcal{F}\in 
\mathcal{H}_{\kappa}(N)$ and $\varrho\in \mathcal{S}_N$, we call the terms of the Fourier expansion which grow towards $\varrho$ the \begin{it}principal part\end{it} (at $\varrho$).

 The coefficients $c_{\mathcal{F},\varrho}^-(n)$ are closely related to coefficients of meromorphic modular forms of weight $2-\kappa$.  Indeed, this relationship follows from the fact that if $\mathcal{F}$ is modular of weight $\kappa$, then $\xi_{\kappa}(\mathcal{F})$ is modular of weight $2-\kappa$, and thus $\xi_{\kappa}$ maps weight $\kappa$ polar
harmonic Maass forms to weight $2-\kappa$ meromorphic modular forms.

We next consider elliptic expansions of polar harmonic Maass forms.  For this, let
$r_{\mathfrak{z}}(z):=|X_{\mathfrak{z}}(z)|$ with $X_{\mathfrak{z}}(z):= \frac{z-\mathfrak{z}}{z-\overline{\mathfrak{z}}}$ and for $0\leq {v}<1$ and $\kappa\in -\N_0$ 
define
\begin{equation*}
\beta_0\left({v};a,b\right):=\beta\left({v};a,b\right)-\mathcal{C}_{a,b}
\end{equation*}
where  $\beta({v};a,b):=\int_0^{v} t^{a-1} (1-t)^{b-1} dt$ is the \begin{it}incomplete beta function\end{it} and 
\begin{equation*}
\mathcal{C}_{a,b}:=\sum_{\substack{0\leq j\leq a-1\\ j\neq -b}} \binom{a-1}{j}\frac{(-1)^{j+1}}{j+b}.
\end{equation*}
Here, for $b\in\N$, we have
$$
\beta_0(v;a,b)=-\beta(1-v;b,a)=-\frac{(1-v)^b}{b}{_2F_1}(b,,1-a;1+b;1-v).
$$
\rm

The elliptic expansion of a polar harmonic Maass form is given in Proposition 2.2 of \cite{BKWeight0}.  However, it was later pointed out by Pioline that the following proposition is a special case of Theorem 1.1 of \cite{Fay}, where a more general expansion is given for  polar Maass forms, 
although in another guise.

\begin{proposition}\label{prop:ellexp}
Suppose that $\kappa\in -2\N_0$ and $\mathfrak{z}\in\mathbb{H}$.  
\noindent

\noindent
\begin{enumerate}[leftmargin=*]
\item[\rm(1)] Assume that $\mathcal{F}$ satisfies $\Delta_{\kappa} (\mathcal{F}) = 0$ and that for some $n_0\in \N$ 
the function $r_{\mathfrak{z}}^{n_0}(z)
 \mathcal{F}(z)$ is bounded in some neighborhood $\mathcal{N}$ around $\mathfrak{z}$. Then there exist $a_{\mathcal{F},\mathfrak{z}}^{\pm}(n) \in\C$, such that for $z\in\mathcal{N}$ 
\begin{equation}\label{elliptic}
\quad\mathcal{F}(z)=\left(z-\overline{\mathfrak{z}}\right)^{-\kappa}\left(\sum_{n\gg -\infty} a_{\mathcal{F},\mathfrak{z}}^+(n) X_{\mathfrak{z}}^n(z) + \sum_{n\ll\infty } a_{\mathcal{F},\mathfrak{z}}^-(n)\beta_0\left(1-r_{\mathfrak{z}}^2(z);1-\kappa,-n\right) X_{\mathfrak{z}}^n(z)\right).
\end{equation}
\item[\rm(2)]
If $\mathcal{F}\in \mathcal{H}_{\kappa}(N)$, then \eqref{elliptic} runs only over those $n$ which satisfy $n\equiv -\kappa/2\pmod{\omega_{\mathfrak{z},N}}$, where $\omega_{\mathfrak{z},N}:=\#\Gamma_{\mathfrak{z},N}$.  Here $\Gamma_{\mathfrak{z},N}:= \Gamma_{\mathfrak{z}}\cap 
\Gamma_0(N)
$ with $\Gamma_{\mathfrak{z}}$ 
the stabilizer of $\mathfrak{z}$ in $\operatorname{PSL}_2(\Z)$.  Furthermore, if $\mathcal{F}\in \mathcal{H}_{\kappa}^{\operatorname{cusp}}(N)$, then the second sum only runs over $n<0$. 
\end{enumerate}
\end{proposition}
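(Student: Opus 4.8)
The plan is to build the expansion \eqref{elliptic} by first peeling off its non-holomorphic part through the operator $\xi_\kappa$ and then recognizing the remainder as holomorphic. By the splitting \eqref{eqn:Deltasplit}, the hypothesis $\Delta_\kappa(\mathcal{F})=0$ is equivalent to $\xi_{2-\kappa}(\xi_\kappa(\mathcal{F}))=0$, so $g:=\xi_\kappa(\mathcal{F})$ is annihilated by $\xi_{2-\kappa}$ and hence holomorphic on a punctured neighborhood of $\mathfrak{z}$; it transforms with weight $2-\kappa$, and the growth condition $r_{\mathfrak{z}}^{n_0}\mathcal{F}=O(1)$ near $\mathfrak{z}$ forces its singularity at $\mathfrak{z}$ to be a pole of finite order. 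Since $z\mapsto X_{\mathfrak{z}}(z)$ is a biholomorphism of $\H$ onto the unit disk sending $\mathfrak{z}\mapsto 0$ and $2-\kappa\in 2\N$ is a positive even integer, the function $(z-\overline{\mathfrak{z}})^{2-\kappa}g(z)$ is meromorphic in $X_{\mathfrak{z}}$ with a finite-order pole at $X_{\mathfrak{z}}=0$, so it has a Laurent expansion. This yields the weight $2-\kappa$ elliptic expansion $g(z)=(z-\overline{\mathfrak{z}})^{\kappa-2}\sum_{m\gg-\infty}c_g(m)X_{\mathfrak{z}}^m(z)$.

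The key computation I would carry out is to apply $\xi_\kappa$ to the non-holomorphic building block $\Phi_n(z):=(z-\overline{\mathfrak{z}})^{-\kappa}\beta_0(1-r_{\mathfrak{z}}^2;1-\kappa,-n)X_{\mathfrak{z}}^n$. Only the incomplete beta factor is non-holomorphic, and from $\tfrac{d}{dt}\beta_0(t;a,b)=t^{a-1}(1-t)^{b-1}$ one gets $\partial_{\overline z}\beta_0(1-r_{\mathfrak{z}}^2;1-\kappa,-n)=-(1-r_{\mathfrak{z}}^2)^{-\kappa}(r_{\mathfrak{z}}^2)^{-n-1}\partial_{\overline z}(r_{\mathfrak{z}}^2)$. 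Using $\partial_z X_{\mathfrak{z}}=2i\,\im(\mathfrak{z})(z-\overline{\mathfrak{z}})^{-2}$ together with the identity $1-r_{\mathfrak{z}}^2=4y\,\im(\mathfrak{z})\,|z-\overline{\mathfrak{z}}|^{-2}$, a direct simplification should collapse everything to $\xi_\kappa(\Phi_n)=C_{\kappa,\mathfrak{z}}\,(z-\overline{\mathfrak{z}})^{\kappa-2}X_{\mathfrak{z}}^{-n-1}$ for an explicit nonzero constant $C_{\kappa,\mathfrak{z}}$; that is, $\xi_\kappa$ carries the $n$-th non-holomorphic term exactly onto the $(-n-1)$-st term of the elliptic expansion of $g$. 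I expect this identification to be the main obstacle: it is the one genuinely computational step, and it is precisely what pins down the incomplete beta function with parameters $1-\kappa$ and $-n$ as the correct shape of the non-holomorphic part.

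With this in hand, existence of \eqref{elliptic} follows by matching coefficients. Setting $a_{\mathcal{F},\mathfrak{z}}^-(n):=C_{\kappa,\mathfrak{z}}^{-1}c_g(-n-1)$, the function $\mathcal{F}-\sum_n a_{\mathcal{F},\mathfrak{z}}^-(n)\Phi_n$ lies in the kernel of $\xi_\kappa$ near $\mathfrak{z}$, so $(z-\overline{\mathfrak{z}})^{\kappa}$ times it is holomorphic there; the boundedness of $r_{\mathfrak{z}}^{n_0}\mathcal{F}$ again rules out an essential singularity, whence it is a Laurent series $\sum_{n\gg-\infty}a_{\mathcal{F},\mathfrak{z}}^+(n)X_{\mathfrak{z}}^n$. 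This produces both sums in \eqref{elliptic} with the asserted one-sided index bounds: the finite pole order of $g$ gives $m\gg-\infty$, equivalently $n\ll\infty$ in the non-holomorphic sum, while the boundedness of the holomorphic remainder gives $n\gg-\infty$ in the first sum. Convergence on a neighborhood $\mathcal{N}$ of $\mathfrak{z}$ then comes from the convergence of the two Laurent expansions together with control of the growth of $\beta_0(1-r_{\mathfrak{z}}^2;1-\kappa,-n)$ in $n$.

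For part (2), suppose first that $\mathcal{F}\in\mathcal{H}_\kappa(N)$, so $\mathcal{F}|_\kappa\gamma=\mathcal{F}$ for each $\gamma$ in the cyclic group $\Gamma_{\mathfrak{z},N}=\Gamma_{\mathfrak{z}}\cap\Gamma_0(N)$ of order $\omega_{\mathfrak{z},N}$. For $\gamma=\left(\begin{smallmatrix}a&b\\c&d\end{smallmatrix}\right)$ fixing $\mathfrak{z}$ one computes $X_{\mathfrak{z}}(\gamma z)=\zeta X_{\mathfrak{z}}(z)$ with $\zeta=(c\overline{\mathfrak{z}}+d)/(c\mathfrak{z}+d)$ a primitive $\omega_{\mathfrak{z},N}$-th root of unity and $|c\mathfrak{z}+d|=1$; feeding these into the slash action shows each term $(z-\overline{\mathfrak{z}})^{-\kappa}X_{\mathfrak{z}}^n$ is multiplied by $\zeta^{\,n+\kappa/2}$. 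Invariance therefore forces $\zeta^{\,n+\kappa/2}=1$, i.e. $n\equiv-\kappa/2\pmod{\omega_{\mathfrak{z},N}}$ (which is meaningful since $-\kappa/2\in\N_0$), in both sums. Finally, if $\mathcal{F}\in\mathcal{H}_\kappa^{\operatorname{cusp}}(N)$ then $g=\xi_\kappa(\mathcal{F})\in S_{2-\kappa}(N)$ is holomorphic at $\mathfrak{z}$, so its elliptic expansion has no negative terms, $c_g(m)=0$ for $m<0$. As $a_{\mathcal{F},\mathfrak{z}}^-(n)$ is a nonzero multiple of $c_g(-n-1)$, this gives $a_{\mathcal{F},\mathfrak{z}}^-(n)=0$ whenever $-n-1<0$, that is for all $n\geq0$, so the non-holomorphic sum runs only over $n<0$.
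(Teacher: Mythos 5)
A preliminary remark on the comparison: this survey does not actually prove Proposition \ref{prop:ellexp}; it quotes it from Proposition 2.2 of \cite{BKWeight0} and notes (following Pioline) that it is a special case of Theorem 1.1 of \cite{Fay}, where the expansion is obtained in the classical way, by separating variables in the disk coordinate $X_{\mathfrak{z}}$: one expands in angular Fourier modes around $\mathfrak{z}$ and solves the second-order ordinary differential equation that harmonicity imposes on each radial coefficient, the two solutions of that equation producing the $X_{\mathfrak{z}}^n$-terms and the incomplete-beta terms. Your argument takes a genuinely different route, inverting $\xi_\kappa$ explicitly mode by mode, and its computational core is correct: one indeed finds
\begin{equation*}
\xi_{\kappa}\left(\left(z-\overline{\mathfrak{z}}\right)^{-\kappa}\beta_0\left(1-r_{\mathfrak{z}}^2(z);1-\kappa,-n\right)X_{\mathfrak{z}}^n(z)\right)=\left(4\mathfrak{z}_2\right)^{1-\kappa}\left(z-\overline{\mathfrak{z}}\right)^{\kappa-2}X_{\mathfrak{z}}^{-n-1}(z),
\end{equation*}
which is precisely the single-term version of \eqref{eqn:xiPoincn}, so your identification of the $n$th non-holomorphic block with the $(-n-1)$st Laurent mode of $g=\xi_\kappa(\mathcal{F})$ is right. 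Part (2) is also handled correctly: the multiplier $\zeta^{\,n+\kappa/2}$ on each term and the observation that $\xi_{\kappa}(\mathcal{F})\in S_{2-\kappa}(N)$ forces $a^-_{\mathcal{F},\mathfrak{z}}(n)=0$ for $n\geq 0$ are exactly what is needed. What your approach buys is a conceptual explanation of why the incomplete beta function with parameters $(1-\kappa,-n)$ is the correct shape; what the separation-of-variables approach buys is a uniform treatment of arbitrary eigenvalues (polar Maass forms, as in \cite{Fay}) with convergence coming for free from Fourier analysis on circles.

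Two points in your write-up need repair, one of them substantive. First, you assert that boundedness of $r_{\mathfrak{z}}^{n_0}\mathcal{F}$ near $\mathfrak{z}$ ``forces'' $g=\xi_\kappa(\mathcal{F})$ to have a pole of finite order. Since $g$ is built from $\frac{\partial}{\partial \overline{z}}\mathcal{F}$, a bound on $\mathcal{F}$ does not by itself bound $g$: to rule out an essential singularity you need an interior estimate for solutions of the elliptic equation $\Delta_\kappa(\mathcal{F})=0$ (a Cauchy/Schauder-type gradient bound giving $\left|\frac{\partial}{\partial \overline{z}}\mathcal{F}(z)\right|\ll |z-\mathfrak{z}|^{-n_0-1}$ on balls of radius comparable to $|z-\mathfrak{z}|$), after which polynomial growth of the holomorphic function $g$ yields a pole. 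This is standard but it is a real step, and it is exactly the step that the separation-of-variables proof avoids; the same issue recurs when you need the remainder $\mathcal{F}-\mathcal{F}^-$ to have finite pole order, and when you apply $\xi_\kappa$ termwise to the infinite series defining $\mathcal{F}^-$ (justified by locally uniform convergence of the differentiated series, which your growth bounds on $\beta_0$ do supply, but which should be stated). Second, a harmless slip: $\xi_\kappa$ is conjugate-linear, so the correct matching of coefficients is $\overline{a^-_{\mathcal{F},\mathfrak{z}}(n)}\left(4\mathfrak{z}_2\right)^{1-\kappa}=c_g(-n-1)$, i.e.\ your $a^-_{\mathcal{F},\mathfrak{z}}(n)$ should be the complex conjugate of what you wrote; relatedly, in part (1) no modularity is assumed, so the aside that $g$ ``transforms with weight $2-\kappa$'' should be deleted.
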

\begin{remark}
The statement in Proposition 2.2 of \cite{BKWeight0} is slightly different; the functions $\beta_0$ are replaced with incomplete beta functions $\beta$.  These functions only differ by a constant, so the change in this version only differs by changing the coefficients of the meromorphic part.
\end{remark}
We define the \begin{it}meromorphic part of the ellipic expansion\end{it} around $\mathfrak{z}$ by 
$$
\mathcal{F}_{\mathfrak{z}}^+ (z):=\left(z-\overline{\mathfrak{z}}\right)^{-\kappa}\sum_{n\gg -\infty}a_{\mathcal{F},\mathfrak{z}}^+(n)X_{\mathfrak{z}}^n(z)
$$
and the \begin{it}non-meromorphic part of the elliptic expansion\end{it} by 
$$
\mathcal{F}_{\mathfrak{z}}^- (z) :=\left(z-\overline{\mathfrak{z}}\right)^{-\kappa}\sum_{n\ll \infty} a_{\mathcal{F},\mathfrak{z}}^-(n)\beta_0\left(1-r_{\mathfrak{z}}^2(z);1-\kappa,-n\right) X_{\mathfrak{z}}^n(z).
$$

\subsection{Bruinier--Funke pairing}

For $g\in S_{2k}(N)$  and  $\mathcal{F}\in \mathcal{H}_{2-2k}^{\operatorname{cusp}}(N),$ following Bruinier and Funke \cite{BF}, define the \textit{pairing}
\begin{equation*}
\{g, \mathcal{F}\}:=\left<g, \xi_{2-2k}(\mathcal{F})\right>,
\end{equation*}
where, for $g,h\in S_{2k}(N)$ and $\mu_N:=\left[\SL_2(\Z):\Gamma_0(N)\right]$,
\begin{equation}\label{eqn:innerclassical}
\left<g,h\right>:=\frac{1}{\mu_N}\int_{\Gamma_0(N)\backslash \H} g(z)\overline{h(z)} y^{2k} \frac{dx dy}{y^2}
\end{equation}
is the standard \begin{it}Petersson inner-product\end{it}.  The pairing $\{g,\mathcal{F}\}$ was computed for $\mathcal{F}\in H_{2-2k}^{\operatorname{cusp}}(N)$ by Bruinier and Funke in Proposition 3.5 of \cite{BF}.  We recall an extension of their evaluation of $\{g,\mathcal{F}\}$ to the entire space $\mathcal{H}_{2-2k}^{\operatorname{cusp}}(N)$, letting $\mathfrak{z}=\mathfrak{z}_1+i\mathfrak{z}_2$ throughout.
\begin{proposition}[Proposition 6.1 of \cite{BKWeight0}]\label{expansionprod}
If $g\in S_{2k}(N)$ and $\mathcal{F}\in\mathcal{H}_{2-2k}^{\operatorname{cusp}}(N)$, then
\begin{equation}\label{eqn:gFpair}
\{g, \mathcal{F}\}=\frac{\pi}{\mu_N}\sum_{\mathfrak{z}\in\Gamma_0(N)\backslash\H} \frac{1}{\mathfrak{z}_2 \omega_{\mathfrak{z},N}}\sum_{n\geq 1}a_{\mathcal{F},\mathfrak{z}}^+\left(-n\right) a_{g,\mathfrak{z}}\left(n-1\right) + \frac{1}{\mu_N}\sum_{\varrho\in\mathcal{S}_N} \sum_{n\geq 1} c_{\mathcal{F},\varrho}^{+}(-n)c_{g,\varrho}(n).
\end{equation}
\end{proposition}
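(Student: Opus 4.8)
The plan is to realize the pairing as the integral of an exact $1$-form and apply Stokes' theorem on a fundamental domain from which the interior singularities have been excised, so that both sums in \eqref{eqn:gFpair} appear as boundary contributions. Writing $\kappa=2-2k$ and recalling $\xi_{\kappa}=2iy^{\kappa}\,\overline{\partial/\partial\overline{z}}$, a direct computation gives $g\,\overline{\xi_{\kappa}(\mathcal{F})}\,y^{2k-2}=-2i\,g\,\frac{\partial\mathcal{F}}{\partial\overline{z}}$. Setting $\eta:=g\mathcal{F}\,dz$ and using that $g$ is holomorphic yields $d\eta=g\,\frac{\partial\mathcal{F}}{\partial\overline{z}}\,d\overline{z}\wedge dz=2i\,g\,\frac{\partial\mathcal{F}}{\partial\overline{z}}\,dx\wedge dy$, so that $\{g,\mathcal{F}\}=\langle g,\xi_{\kappa}(\mathcal{F})\rangle=-\frac{1}{\mu_N}\int_{\Gamma_0(N)\backslash\H}d\eta$. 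The form $\eta$ is $\Gamma_0(N)$-invariant because $g\mathcal{F}$ has weight $2$ and $dz$ transforms with weight $-2$. First I would confirm convergence: although $\mathcal{F}$ has poles in $\H$, the integrand $d\eta$ is a constant multiple of $g\,\overline{\xi_{\kappa}(\mathcal{F})}\,y^{2k-2}\,dx\,dy$, and since $\mathcal{F}\in\mathcal{H}_{2-2k}^{\operatorname{cusp}}(N)$ the form $\xi_{\kappa}(\mathcal{F})$ is a genuine cusp form, so this is the convergent Petersson product of two cusp forms.

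Next I would apply Stokes to the fundamental domain with the finitely many singular points $\mathfrak{z}$ excised by small geodesic circles of radius $\varepsilon$ and the cusp neighborhoods truncated at height $T$. The sides identified by $\Gamma_0(N)$ cancel by invariance of $\eta$, leaving $-\frac{1}{\mu_N}\int d\eta$ as a sum of clockwise circles $C_{\mathfrak{z}}$ around the poles and horocyclic segments over each cusp $\varrho\in\mathcal{S}_N$. For the circles I would insert the elliptic expansion \eqref{elliptic} of $\mathcal{F}$ together with the classical elliptic expansion $g(z)=(z-\overline{\mathfrak{z}})^{-2k}\sum_{n\geq0}a_{g,\mathfrak{z}}(n)X_{\mathfrak{z}}^n(z)$. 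Passing to the local coordinate $w=X_{\mathfrak{z}}(z)$, for which $dz=(2i\mathfrak{z}_2)^{-1}(z-\overline{\mathfrak{z}})^2\,dw$, the prefactors $(z-\overline{\mathfrak{z}})^{-2k}$ and $(z-\overline{\mathfrak{z}})^{-(2-2k)}=(z-\overline{\mathfrak{z}})^{2k-2}$ combine with $dz$ to a pure Laurent series in $w$ times $(2i\mathfrak{z}_2)^{-1}\,dw$, so the contour integral is an honest residue. Only the meromorphic part $\mathcal{F}_{\mathfrak{z}}^+$ contributes; its residue extracts $\sum_{m+n=-1}a_{g,\mathfrak{z}}(m)a_{\mathcal{F},\mathfrak{z}}^+(n)$, which after reindexing $m=n-1$ gives $\frac{\pi}{\mathfrak{z}_2}\sum_{n\geq1}a_{\mathcal{F},\mathfrak{z}}^+(-n)a_{g,\mathfrak{z}}(n-1)$. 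The orbifold factor $\omega_{\mathfrak{z},N}^{-1}$ enters because only a $\omega_{\mathfrak{z},N}^{-1}$-wedge of the circle around an elliptic fixed point lies in the fundamental domain, producing the first sum in \eqref{eqn:gFpair}.

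For the cusps I would transport each horocycle to $i\infty$ via $M_{\varrho}$, using invariance of $\eta$, and substitute the Fourier expansions \eqref{eqn:F-exp} of $\mathcal{F}$ and of $g$. Integrating in $x$ over one period and using orthogonality of the exponentials $e^{2\pi inz/\ell_{\varrho}}$, the holomorphic part $\mathcal{F}_{\varrho}^+$ contributes only through matching frequencies, giving a $T$-independent pairing $\sum_{n\geq1}c_{\mathcal{F},\varrho}^+(-n)c_{g,\varrho}(n)$ weighted by the cusp width, while the non-holomorphic part $\mathcal{F}_{\varrho}^-$ contributes only through incomplete gamma factors $\Gamma(2k-1,4\pi nT/\ell_{\varrho})$, which vanish as $T\to\infty$; the constant term $y^{2k-1}$ is annihilated since $g$ has no constant Fourier coefficient. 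Collecting the two families of boundary terms and dividing by $\mu_N$ gives \eqref{eqn:gFpair}; this cusp computation is precisely the content of Proposition 3.5 of \cite{BF}, so the genuinely new ingredient is the interior analysis.

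I expect the main obstacle to be this interior residue analysis: one must justify the limit $\varepsilon\to0$ and, crucially, show that the non-meromorphic part $\mathcal{F}_{\mathfrak{z}}^-$ makes no contribution. This is exactly where the hypothesis $\mathcal{F}\in\mathcal{H}_{2-2k}^{\operatorname{cusp}}(N)$ is essential: by Proposition \ref{prop:ellexp}(2) the non-meromorphic elliptic sum then runs only over $n<0$, so each $\beta_0(1-r_{\mathfrak{z}}^2(z);2k-1,-n)$ remains bounded as $z\to\mathfrak{z}$, whence $g\mathcal{F}_{\mathfrak{z}}^-\,dz$ is $O(\varepsilon)$ on $C_{\mathfrak{z}}$ and drops out in the limit. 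The remaining difficulty is careful bookkeeping of orientations, the orbifold weights $\omega_{\mathfrak{z},N}$, and the cusp-width normalizations needed to match the constants in \eqref{eqn:gFpair}.
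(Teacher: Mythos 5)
Your proposal is correct and follows essentially the same route as the paper's own (sketched) proof: Stokes' theorem applied to a fundamental domain with small neighborhoods excised around the cusps and the singularities of $\mathcal{F}$, the boundary circles yielding the pairing of elliptic coefficients and the truncated cusp neighborhoods yielding the pairing of Fourier coefficients. One small sharpening: near $\mathfrak{z}$ the factor $\beta_0\left(1-r_{\mathfrak{z}}^2(z);2k-1,-n\right)$ with $n<0$ is not merely bounded but vanishes like $r_{\mathfrak{z}}^{-2n}(z)$, and it is this vanishing that overcomes the blow-up of $X_{\mathfrak{z}}^{n}(z)$ and forces the contribution of $\mathcal{F}_{\mathfrak{z}}^-$ to drop out as $\varepsilon\to 0$.
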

\begin{proof}[Sketch of proof]
One uses Stokes Theorem for a fundamental domain with small neighborhoods cut out around each of the cusps and the singularities of $\mathcal{F}$.  Integrating along the boundary of these neighborhoods yields a product of the Fourier coefficients from the expansions around the cusps and a product of the elliptic coefficients near each pole.
\end{proof}
\subsection{Green's functions}\label{sec:Greens}
Recall that for $k\in\mathbb{N}_{>1}$ and $\Gamma\subset \SL_2(\mathbb{Z})$ of finite index, the \textit{higher Green's function} $G^{\mathbb{H}/\Gamma}_k: \mathbb{H}\times\mathbb{H}\to\mathbb{C}$ is uniquely characterized by the following properties:
\noindent

\noindent
\begin{enumerate}[leftmargin=*]
\item $G^{\mathbb{H}/\Gamma}_k$ is a smooth real-valued function on $\mathbb{H}\times\mathbb{H}\setminus \{(z, \gamma z)|\gamma\in\Gamma, z\in\mathbb{H}\}.$
\item For $\gamma_1, \gamma_2\in\Gamma$, we have $G^{\mathbb{H}/\Gamma}_k(\gamma_1 z, \gamma_2\mathfrak{z})= G^{\mathbb{H}/\Gamma}_k(z, \mathfrak{z}).$
\item We have 
\[
\Delta_{0, z}\left(G^{\mathbb{H}/\Gamma}_k\left(z, \mathfrak{z}\right)\right)
=\Delta_{0, \mathfrak{z}}
\left(G^{\mathbb{H}/\Gamma}_k\left(z, \mathfrak{z}\right)\right)=k(1-k)G^{\mathbb{H}/\Gamma}_k\left(z, \mathfrak{z}\right).
\]
\item As $z\to\mathfrak{z}$
\[
G^{\mathbb{H}/\Gamma}_k(z, \mathfrak{z})=2\omega_{\mathfrak{z}}\log\left(r_{\mathfrak{z}}(z)\right)+O(1).
\]
\item As $z$ approaches a cusp, $G^{\mathbb{H}/\Gamma}_k(z, \mathfrak{z})\to 0$.
\end{enumerate}

These higher Green's functions have a long history, appearing as special cases of the resolvent kernel studied by Fay \cite{Fay} and investigated thoroughly by Hejhal in \cite{Hejhal}, for example.  Gross and Zagier \cite{GZ} conjectured
\rm
that their evaluations at CM-points are essentially logarithms of algebraic numbers. In the special case that 
the space of weight $2k$ cusp forms on $\Gamma$ is trivial,  the conjecture reads as
\[
G^{\mathbb{H}/\Gamma}_k(z, \mathfrak{z})=(D_1 D_2)^{\frac{1-k}{2}}\log(\alpha)
\]
for CM-points $z, \mathfrak{z}$  of discriminants $D_1 \text{ and } D_2$, respectively and $\alpha$ some algebraic number. Various cases of this conjecture have been solved. For example, Mellit, in his Ph.D. thesis \cite{Mellit}, proved the case $k=2, \mathfrak{z}=i$ and also gave an interpretation to $\alpha$ as a certain intersection number of certain higher Chow cycles.

\section{Poincar\'e series}\label{sec:Poincare}
\subsection{Maass Poincar\'e series}

An important tool to construct automorphic forms are \begin{it}Poincar\'e series\end{it}, which have a long history going back to Poincar\'e \cite{PoincareSeries}.  To be more precise, for $\kappa>2$ and $m\in\Z$, the classical (weakly) holomorphic Poincar\'e series are defined by (see (2b.1) of \cite{Pe1})
\begin{equation}\label{eqn:weakPoinc}
\mathcal{P}_{\kappa,m,N}(z):={\displaystyle{\sum_{M\in \Gamma_{\infty}\backslash\Gamma_0(N)}}} e^{2\pi imz}\bigg|_{\kappa}M \in M_{\kappa}^!(N),
\end{equation}
where $\Gamma_{\infty}:=\left\{\pm \left(\begin{smallmatrix}1&n\\ 0 &1\end{smallmatrix}\right)\middle|n\in\Z\right\}$.  We further set
$$
P_{\kappa,m,N}:=\mathcal{P}_{\kappa,m,N}\Big|\operatorname{pr}\in M_{\kappa}^!(N),
$$
where $|\operatorname{pr}$ is the identity for $\kappa\in\Z$ and the projection operator (see p. 250 of \cite{KohnenFourier} for a definition) into Kohnen's plus space if $\kappa\notin\Z$.  Here the restriction $\kappa>2$ is made so that the series converge absolutely and uniformly on compact sets.  

To construct harmonic Maass forms, for $\kappa<0$ and $m\in \Z\setminus\{0\}$, we define the harmonic function
$$
\varphi_{\kappa,m}(z):=\frac{(-\sgn(m))^{1-\kappa}}{\Gamma(2-\kappa)}(4\pi |m|y)^{-\frac{\kappa}{2}}M_{\sgn(m)\frac{\kappa}{2},\frac{1-\kappa}{2}}(4\pi |m|y)e^{2\pi i mx},
$$
where $M_{\mu, \nu}$ is the $M$-Whittaker function.  
These forms appeared in applications for a number of authors (cf. \cite{BF,Fay}) following 
work of Niebur \cite{Niebur}, who built eigenfunctions under $\Delta_0$. For $\kappa\in-\frac{1}{2}\N$ and $N\in\N$, we then set 
$$
\mathcal{F}_{\kappa,m,N}:=\sum_{M\in\Gamma_{\infty}\backslash\Gamma_0(N)} \varphi_{\kappa,m}\bigg|_{\kappa}M\in H_{\kappa}(N).
$$
and 
\begin{equation}\label{eqn:calFdef}
F_{\kappa,m,N}:=\mathcal{F}_{\kappa,m,N}\Big|\operatorname{pr}\in H_{
\kappa
}(N).
\end{equation}
For $m<0$, the functions $F_{\kappa,m,N}$ have principal parts $q^{m}$ at $i\infty$ whereas for $m>0$ they have a non-holomorphic principal part.  Furthermore, a straightforward calculation, using the fact that $\xi$ commutes with the slash action, yields for $m\in \Z\setminus\{0\}$ (e.g., see the displayed formula after (6.8) of \cite{BOR}) 
\begin{equation*}
\xi_{\kappa}\left(F_{\kappa,m,N}\right) = \frac{(-1)^{\kappa-1}(4\pi m)^{1-\kappa}}{
\Gamma(1-\kappa)
}P_{2-\kappa,-m,N}.
\end{equation*}
If $\kappa\in-\N$, then $F_{\kappa,m,N}$ also has a nice image under $\mathcal{D}^{1-\kappa}$.  Specifically, Bruinier, Ono, and Rhoades showed in (6.8) of \cite{BOR} (see also Satz 9 of \cite{Pe1}, where part of the computation was carried out) that
$$
\mathcal{D}^{1-\kappa}\left(\mathcal{F}_{\kappa,m,N}\right)=m^{1-\kappa} P_{2-\kappa,m,N}.
$$

\subsection{Elliptic Poincar\'e series}

For $N\in\N$, $n\in\Z$, and $2<\kappa\in \frac{1}{2}\Z$, we next recall the elliptic Poincar\'e series, defined in  \cite{Pe3,Pe1},
\begin{equation}\label{eqn:Psidef}
\Psi_{\kappa,n,N}(z, \mathfrak{z}):=\sum_{M\in\Gamma_0(N)}\frac{(z-\mathfrak{z})^n}{(z -\overline{\mathfrak{z}})^{\kappa+n}}\bigg|_{\kappa,z}M.
\end{equation}
The functions $z\mapsto \Psi_{\kappa,n,N}(z,\mathfrak{z})$ are weight $\kappa$ meromorphic modular forms which are cusp forms if $n\in\N_0$ and have a pole of order $|n|$ if $n\in -\N_0$.  We also note that 
\begin{equation}\label{eqn:Psi^c}
\Psi_{\kappa,n,N}^{\operatorname{c}}(z,\mathfrak{z})=\Psi_{\kappa,n,N} (-z,\overline{\mathfrak{z}}).
\end{equation}

Moreover, for $n=-1$, these are closely related to other Poincar\'e series defined 
in (2b.9) of \cite{Pe1} by 
\begin{equation*}
H_{\kappa,N}(z,\mathfrak{z}):=2\sum_{M\in \Gamma_{\infty}\backslash \Gamma_0(N)} \frac{1}{1-e^{2\pi i (\mathfrak{z}-z)}}\bigg|_{\kappa,z} M.
\end{equation*}
To state the connection, define
 $$
K_{\kappa,N}(z,\mathfrak{z}):=\frac{i}{\pi}(2i\mathfrak{z}_2)^{\kappa-1}\sum_{n\in \N} \mathcal{I}_{\kappa,-n}(\mathfrak{z}_2)e^{-2\pi in\mathfrak{z}}\mathcal{P}_{\kappa,n,N}(z)
$$
with (for some $-2\mathfrak{z}_2<\alpha<0$ 
and $r\in\Z$)
\begin{equation*}
\mathcal{I}_{\kappa,r}(\mathfrak{z}_2):=\int_{i\alpha -\infty}^{i\alpha +\infty} \frac{e^{2\pi i r t}}{\left(t+2i\mathfrak{z}_2\right)^{\kappa-1}} \frac{dt}{t}.
\end{equation*}
Petersson showed in (5b.6) of \cite{Pe1} that
\begin{equation}\label{eqn:split}
\left(2i\mathfrak{z}_2\right)^{\kappa-1}\Psi_{\kappa,-1,N}(z,\mathfrak{z})=2\pi i H_{\kappa,N}(z,\mathfrak{z}) -2\pi i K_{\kappa,N}(z,\mathfrak{z}).
\end{equation}
While he recognized the left-hand side of \eqref{eqn:split} as a non-holomorphic modular form of weight $2-\kappa$, as a function of $\mathfrak{z}$, and investigated the meromorphic functions $H_{\kappa,N}$, on page 67 of \cite{Pe1} he questioned the place of the functions $K_{\kappa,N}$ within the framework of automorphic forms.  They are cusp forms as a function of $z$, but their properties as a function of $\mathfrak{z}$ were rather mysterious. 
This was investigated in \cite{BKFourier}, and its connection to 
the elliptical Poincar\'e series, defined in \eqref{eqn:Psidef}, is given in Theorem \ref{ConjectureTheorem}.

\section{Understanding Petersson's splitting and bases for polar harmonic Maass forms}\label{sec:PeterssonSplit}

\subsection{Petersson's splitting}
Throughout this section, $2<\kappa \in \frac{1}{2}\mathbb{N}$.  One can recognize \eqref{eqn:split} as 
the usual 
 splitting of a weight $2-\kappa$ polar harmonic Maass form into its holomorphic and non-holomorphic parts around $i\infty$.
\begin{proposition}[Proposition 3.1 of \cite{BKFourier}]\label{prop:HkPsi}
For $N\in\N$, $\mathfrak{z}\mapsto\mathfrak{z}_2^{\kappa-1}\Psi_{\kappa,-1,N}(z,\mathfrak{z})\in \H_{2-\kappa}^{\operatorname{cusp}}(N)$ with meromorphic part
$$
\frac{\pi }{(2i)^{\kappa-2}}H_{\kappa,N}(z,\mathfrak{z}).
$$
\end{proposition}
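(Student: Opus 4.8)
The plan is to realize $g(z,\mathfrak{z}):=\mathfrak{z}_2^{\kappa-1}\Psi_{\kappa,-1,N}(z,\mathfrak{z})$, viewed as a function of $\mathfrak{z}$, as an element of $\H_{2-\kappa}^{\operatorname{cusp}}(N)$ and then to read off its meromorphic part directly from Petersson's splitting \eqref{eqn:split}. Rewriting \eqref{eqn:split} as
$$
g(z,\mathfrak{z})=\frac{\pi}{(2i)^{\kappa-2}}\left(H_{\kappa,N}(z,\mathfrak{z})-K_{\kappa,N}(z,\mathfrak{z})\right),
$$
it suffices to prove that $g$ is a weight $2-\kappa$ polar harmonic Maass form in $\mathfrak{z}$ whose meromorphic part is the $H$-term and whose non-meromorphic part is the $K$-term.

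First I would establish weight $2-\kappa$ modularity in $\mathfrak{z}$. Writing $\phi(z,\mathfrak{z}):=(z-\mathfrak{z})^{-1}(z-\overline{\mathfrak{z}})^{1-\kappa}$ for the seed of \eqref{eqn:Psidef}, a direct computation using $\det\gamma=1$ gives, for $\gamma=\left(\begin{smallmatrix}a&b\\c&d\end{smallmatrix}\right)\in\Gamma_0(N)$,
$$
\phi(\gamma z,\gamma\mathfrak{z})=\phi(z,\mathfrak{z})\,(cz+d)^{\kappa}(c\mathfrak{z}+d)(c\overline{\mathfrak{z}}+d)^{\kappa-1}.
$$
Replacing $\mathfrak{z}$ by $\gamma\mathfrak{z}$ in $\Psi_{\kappa,-1,N}$ and re-indexing the sum over $\Gamma_0(N)$ (absorbing $\gamma$ into the summation variable, so that the $(cz+d)^{\kappa}$ factor is cancelled through the cocycle relation by the weight $\kappa$ slash in $z$) leaves the automorphy factor $(c\mathfrak{z}+d)(c\overline{\mathfrak{z}}+d)^{\kappa-1}$. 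Multiplying by $\operatorname{Im}(\gamma\mathfrak{z})^{\kappa-1}=\mathfrak{z}_2^{\kappa-1}|c\mathfrak{z}+d|^{-2(\kappa-1)}$, the $(c\overline{\mathfrak{z}}+d)^{\kappa-1}$ cancels and one is left with $(c\mathfrak{z}+d)^{2-\kappa}$; that is, $g(z,\gamma\mathfrak{z})=(c\mathfrak{z}+d)^{2-\kappa}g(z,\mathfrak{z})$, the desired transformation.

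The harmonicity and the cuspidality of $\xi_{2-\kappa}(g)$ I would obtain simultaneously from one computation of $\xi_{2-\kappa,\mathfrak{z}}(g)$ term by term. For fixed $w\in\H$ one finds $\partial_{\overline{\mathfrak{z}}}\big(\mathfrak{z}_2^{\kappa-1}(w-\mathfrak{z})^{-1}(w-\overline{\mathfrak{z}})^{1-\kappa}\big)=\tfrac{i}{2}(\kappa-1)\mathfrak{z}_2^{\kappa-2}(w-\overline{\mathfrak{z}})^{-\kappa}$; the crucial feature is that the pole $(w-\mathfrak{z})^{-1}$ cancels. Hence $\xi_{2-\kappa,\mathfrak{z}}$ of each seed equals $(\kappa-1)(\overline{w}-\mathfrak{z})^{-\kappa}$, which is holomorphic in $\mathfrak{z}$, so summing over $\Gamma_0(N)$ shows that $\xi_{2-\kappa,\mathfrak{z}}(g)$ is a constant multiple of a weight $\kappa$ holomorphic elliptic Poincar\'e series in $\mathfrak{z}$, which vanishes at every cusp and is therefore a cusp form. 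Since $\Delta_{2-\kappa}=-\xi_{\kappa}\circ\xi_{2-\kappa}$ by \eqref{eqn:Deltasplit} and $\xi_{\kappa}$ annihilates holomorphic functions, this also gives $\Delta_{2-\kappa,\mathfrak{z}}(g)=0$. The only singularities of $g$ in $\mathfrak{z}$ come from the factors $(z-\mathfrak{z})^{-1}$, i.e., simple poles along $\Gamma_0(N)z$, so condition (3) of the definition holds; and $g$ is bounded towards the cusps, since in \eqref{eqn:split} the function $H_{\kappa,N}$ tends to a constant as $\mathfrak{z}\to i\infty$ while the non-holomorphic contribution of $K_{\kappa,N}$ decays. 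Thus $g\in\H_{2-\kappa}^{\operatorname{cusp}}(N)$ as a function of $\mathfrak{z}$.

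It remains to identify the meromorphic part, which I expect to be the most delicate step. Expanding the geometric series $\big(1-e^{2\pi i(\mathfrak{z}-z)}\big)^{-1}$ shows that $H_{\kappa,N}$ is meromorphic in $\mathfrak{z}$ with poles exactly along $\Gamma_0(N)z$, so $\frac{\pi}{(2i)^{\kappa-2}}H_{\kappa,N}$ has the shape of a meromorphic part. The main point is to confirm that $K_{\kappa,N}$ contributes nothing to the meromorphic part; because $H_{\kappa,N}$ and $K_{\kappa,N}$ are individually not modular in $\mathfrak{z}$, one cannot argue by modularity and must instead compare expansions. Concretely, I would insert the integral representation of $\mathcal{I}_{\kappa,-n}(\mathfrak{z}_2)$ and show that $(2i\mathfrak{z}_2)^{\kappa-1}\mathcal{I}_{\kappa,-n}(\mathfrak{z}_2)e^{-2\pi i n\mathfrak{z}}$ is, up to constants, an incomplete gamma function $\Gamma(\kappa-1,4\pi n\mathfrak{z}_2)e^{-2\pi i n\mathfrak{z}}$, i.e., precisely of the non-holomorphic type appearing in \eqref{eqn:F-exp} for weight $2-\kappa$. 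Hence $-\frac{\pi}{(2i)^{\kappa-2}}K_{\kappa,N}$ consists solely of non-meromorphic terms, and by the uniqueness of the decomposition of $g$ into its meromorphic and non-meromorphic parts the meromorphic part equals $\frac{\pi}{(2i)^{\kappa-2}}H_{\kappa,N}$, as claimed.
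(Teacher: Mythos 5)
Your strategy is exactly the one the survey itself indicates in the sentence preceding the proposition (the survey gives no proof, deferring to Proposition 3.1 of \cite{BKFourier}): rewrite Petersson's identity \eqref{eqn:split} as $\mathfrak{z}_2^{\kappa-1}\Psi_{\kappa,-1,N}(z,\mathfrak{z})=\frac{\pi}{(2i)^{\kappa-2}}\left(H_{\kappa,N}(z,\mathfrak{z})-K_{\kappa,N}(z,\mathfrak{z})\right)$ and recognize this as the splitting into meromorphic and non-meromorphic parts. Your key computations are correct. The bimodularity calculation is the standard one; the cancellation of the pole $(w-\mathfrak{z})^{-1}$ under $\partial_{\overline{\mathfrak{z}}}$ is right, and the resulting image under $\xi_{2-\kappa,\mathfrak{z}}$ is (after the re-indexing $M\mapsto M^{-1}$, which you should make explicit, since your sum has the group acting on $\overline{z}$ rather than on $\mathfrak{z}$) precisely $(\kappa-1)e^{\pi i\kappa}\Psi_{\kappa,0,N}(\mathfrak{z},z)$, the cusp form quoted in the survey's sketch of Theorem \ref{ConjectureTheorem}; this gives harmonicity and the superscript ``cusp'' simultaneously, as you say. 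Your claim about $\mathcal{I}_{\kappa,-n}$ also checks out: pushing the contour down past $t=-2i\mathfrak{z}_2$ yields, for $\kappa\in\Z$, $(2i\mathfrak{z}_2)^{\kappa-1}\mathcal{I}_{\kappa,-n}(\mathfrak{z}_2)=\frac{2\pi i}{(\kappa-2)!}\Gamma\left(\kappa-1,4\pi n\mathfrak{z}_2\right)$, so $K_{\kappa,N}$ is a pure expansion of the non-holomorphic type in \eqref{eqn:F-exp}, and the uniqueness argument you then invoke is the same one the survey uses in that sketch.

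The one genuine gap is the cusp condition. Membership in $\H_{2-\kappa}^{\operatorname{cusp}}(N)$ requires boundedness toward \emph{every} cusp of $\Gamma_0(N)$, but your growth argument, which leans on \eqref{eqn:split}, only controls the behavior at $i\infty$; for $N>1$ you must also bound the slash of the form by scaling matrices $\sigma$ of the remaining cusps, where Petersson's splitting is not available. The fix is routine and fits your framework: each seed $\mathfrak{z}_2^{\kappa-1}(Mz-\mathfrak{z})^{-1}(Mz-\overline{\mathfrak{z}})^{1-\kappa}$ is $O\left(\mathfrak{z}_2^{-1}\right)$ as $\mathfrak{z}_2\to\infty$, slashing by $\sigma$ merely replaces the points $Mz$ by $\sigma^{-1}Mz$ up to bounded automorphy factors, and the absolute locally uniform convergence for $\kappa>2$ lets you pass this decay through the sum --- but as written this step is missing. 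Two smaller caveats: your automorphy-factor and residue computations are carried out for $\kappa\in\Z$, whereas the section allows $\kappa\in\frac{1}{2}\N$, in which case multiplier systems enter and the residue becomes a branch-cut integral (the survey acknowledges this restriction by pointing to the general multiplier setting of \cite{BKFourier}); and the termwise application of $\xi_{2-\kappa,\mathfrak{z}}$ should be justified by the same locally uniform convergence.
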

Proposition \ref{prop:HkPsi} implies that the functions $K_{\kappa,N}$ are the non-holomorphic parts of polar harmonic Maass forms.  In particular, they are non-holomorphic Eichler integrals of the functions $\Psi_{\kappa,0,N}(\mathfrak{z},z)$.
\begin{theorem}[Theorem 1.1 of \cite{BKFourier}]\label{ConjectureTheorem}
We have 
\begin{equation}\label{eqn:conj}
K_{\kappa,N}\left(z,\mathfrak{z}\right)=-\frac{i^{\kappa}(\kappa-1)}{2^{\kappa-1}\pi}\Psi_{\kappa,0,N}^{*}(\mathfrak{z},z).
\end{equation}
\end{theorem}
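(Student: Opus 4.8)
The plan is to recognize both sides of \eqref{eqn:conj} as arising from the single polar harmonic Maass form furnished by Proposition \ref{prop:HkPsi}, and to match them through the operator $\xi_{2-\kappa}$. Writing $\mathcal{F}(\mathfrak{z}):=\mathfrak{z}_2^{\kappa-1}\Psi_{\kappa,-1,N}(z,\mathfrak{z})$ and using $(2i\mathfrak{z}_2)^{\kappa-1}=(2i)^{\kappa-1}\mathfrak{z}_2^{\kappa-1}$, Petersson's splitting \eqref{eqn:split} reads $\mathcal{F}=(2i)^{1-\kappa}\left(2\pi i\,H_{\kappa,N}-2\pi i\,K_{\kappa,N}\right)$. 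On the other hand, Proposition \ref{prop:HkPsi} asserts that, as a function of $\mathfrak{z}$, $\mathcal{F}\in\H_{2-\kappa}^{\operatorname{cusp}}(N)$ with meromorphic part exactly $\tfrac{\pi}{(2i)^{\kappa-2}}H_{\kappa,N}=(2i)^{1-\kappa}\,2\pi i\,H_{\kappa,N}$. Comparing the two descriptions, the $H$-term is precisely the meromorphic part, so the non-meromorphic part of $\mathcal{F}$ must be $\mathcal{F}^-=-(2i)^{1-\kappa}\,2\pi i\,K_{\kappa,N}(z,\mathfrak{z})$. It therefore suffices to identify $\mathcal{F}^-$ explicitly.

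For this I would invoke the structure theory recalled in Section \ref{sec:polar}. Since $\mathcal{F}\in\H_{2-\kappa}^{\operatorname{cusp}}(N)$, the form $g:=\xi_{2-\kappa,\mathfrak{z}}(\mathcal{F})$ is a weight $\kappa$ cusp form in $\mathfrak{z}$, and by \eqref{eqn:xig*} (applied with weight $2-\kappa$) its non-holomorphic Eichler integral $g^*$ satisfies $\xi_{2-\kappa}(g^*)=g$. Hence $\mathcal{F}-g^*$ lies in the kernel of $\xi_{2-\kappa}$ and is therefore meromorphic; since $g^*$ carries no meromorphic part, comparing non-meromorphic parts forces $\mathcal{F}^-=g^*$. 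Thus the whole theorem reduces to computing $g=\xi_{2-\kappa,\mathfrak{z}}(\mathcal{F})$ and checking that $g$ is a constant multiple of $\Psi_{\kappa,0,N}(\mathfrak{z},z)$, after which $g^*$ is automatically the same multiple of $\Psi_{\kappa,0,N}^*(\mathfrak{z},z)$ by definition \eqref{eqn:g^*def}.

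The core calculation is this $\xi$-computation. Because $\xi$ commutes with the slash action, I would move $\xi_{2-\kappa,\mathfrak{z}}$ inside the sum defining $\Psi_{\kappa,-1,N}$ and apply it to the seed $\mathfrak{z}_2^{\kappa-1}(z-\mathfrak{z})^{-1}(z-\overline{\mathfrak{z}})^{1-\kappa}$. A direct differentiation, using $\partial_{\overline{\mathfrak{z}}}\mathfrak{z}_2=\tfrac{i}{2}$ together with the algebraic simplification $\tfrac{i}{2}(z-\overline{\mathfrak{z}})+\mathfrak{z}_2=\tfrac{i}{2}(z-\mathfrak{z})$, collapses the seed to $(\kappa-1)(\overline{z}-\mathfrak{z})^{-\kappa}$; this is the key cancellation that simultaneously removes the pole at $\mathfrak{z}=z$ and the index $n=-1$. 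Summing over $\Gamma_0(N)$ and then reindexing $M\mapsto M^{-1}$ converts the sum over $M$ acting in $z$ into the defining sum of $\Psi_{\kappa,0,N}(\mathfrak{z},z)$ acting in $\mathfrak{z}$, via the identity that, under $M\mapsto M^{-1}$, the factor $a\overline{z}+b-\mathfrak{z}(c\overline{z}+d)$ becomes $-\left(a\mathfrak{z}+b-\overline{z}(c\mathfrak{z}+d)\right)$; this produces $g=c\,\Psi_{\kappa,0,N}(\mathfrak{z},z)$ for an explicit constant $c$ and hence $g^*=c\,\Psi_{\kappa,0,N}^*(\mathfrak{z},z)$.

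I expect the main obstacle to be neither the structural argument nor the core differentiation, but rather the careful bookkeeping of constants and branch cuts of the half-integral powers $(\cdot)^{-\kappa}$ and $(2i)^{1-\kappa}$ throughout, where the precise factors $i^{\kappa}$, $(\kappa-1)$, $2^{\kappa-1}$, and $\pi$ of \eqref{eqn:conj} are assembled; in particular the reindexing $M\mapsto M^{-1}$ must be carried out compatibly with the multiplier system in the half-integral weight case. A secondary point needing care is confirming that $g^*$ has no meromorphic part, so that the identification $\mathcal{F}^-=g^*$ is genuinely forced; this holds because $\Psi_{\kappa,0,N}(\mathfrak{z},z)$ is a cusp form in $\mathfrak{z}$ (the index is $n=0\geq 0$), whence its non-holomorphic Eichler integral is purely non-holomorphic. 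Equating $-(2i)^{1-\kappa}2\pi i\,K_{\kappa,N}=\mathcal{F}^-=c\,\Psi_{\kappa,0,N}^*(\mathfrak{z},z)$ and solving for $K_{\kappa,N}$ then yields the stated constant $-\tfrac{i^{\kappa}(\kappa-1)}{2^{\kappa-1}\pi}$.
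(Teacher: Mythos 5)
Your proposal follows essentially the same route as the paper's proof: you use Proposition \ref{prop:HkPsi} together with Petersson's splitting \eqref{eqn:split} to identify $K_{\kappa,N}$ with the non-meromorphic part of $\mathfrak{z}_2^{\kappa-1}\Psi_{\kappa,-1,N}(z,\mathfrak{z})$, compute $\xi_{2-\kappa,\mathfrak{z}}$ of that form to obtain a constant multiple of $\Psi_{\kappa,0,N}(\mathfrak{z},z)$, invoke \eqref{eqn:xig*} for the other side, and conclude by the same rigidity argument the paper uses, namely that a function which is simultaneously meromorphic and has a purely non-holomorphic expansion of type \eqref{eqn:F-exp} must vanish. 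The one small caveat is that $g\mapsto g^*$ is conjugate-linear rather than linear, so $g^*$ is the complex-conjugate multiple of $\Psi_{\kappa,0,N}^{*}(\mathfrak{z},z)$ — immaterial in even integral weight but exactly part of the half-integral-weight constant bookkeeping you already flagged as the remaining work.
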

\begin{remark}
The result in \cite{BKFourier} is much more general, considering real weight with multiplier on more general groups.  In this note, we only introduce polar harmonic Maass forms in this restricted setting to avoid extra notation. The right-hand side of \eqref{eqn:conj} was also explicitly given as an integral in \cite{BKFourier}, which follows by \eqref{eqn:g^*def} and \eqref{eqn:Psi^c}.

\end{remark}
\begin{proof}[Sketch of proof]
By Proposition \ref{prop:HkPsi}, $K_{\kappa,N}$ is (up to a constant) the non-holomorphic part of $\mathfrak{z}_2\mapsto\mathfrak{z}_2^{\kappa-1}\Psi_{\kappa,-1,N}(z,\mathfrak{z})$. A short computation shows that 
\begin{equation*}
\xi_{2-\kappa,\mathfrak{z}}\left(\mathfrak{z}_2^{\kappa-1}\Psi_{\kappa,-1,N}(z,\mathfrak{z})\right)= (\kappa-1)e^{\pi i\kappa}\Psi_{\kappa,0,N}(\mathfrak{z},z).
\end{equation*}    
Moreover, \eqref{eqn:xig*} implies that the right-hand side of \eqref{eqn:conj} is also mapped to a constant multiple of $\Psi_{\kappa,0,N}(\mathfrak{z},z)$.  Hence the difference is annihilated by $\xi_{2-\kappa}$, implying that it is meromorphic.  However, one can show that both sides of \eqref{eqn:conj} have expansions of the type \eqref{eqn:F-exp}, and such an expansion can only be meromorphic if all coefficients 
are zero.  Hence the difference 
vanishes, giving the claim.

\end{proof}

\subsection{Bases for meromorphic modular forms and polar harmonic Maass forms}\label{sec:basis}
One obtains a basis for the space $\H_{2-\kappa,N}^{\operatorname{cusp}}$ by repeatedly applying 
repeated raising to $\mathfrak{z}_2^{\kappa-1}\Psi_{\kappa,-1,N}(z,\mathfrak{z})$.  Note that the non-holomorphic part of a polar harmonic Maass form vanishes if and only if the form is a meromorphic modular form.  Hence comparing meromorphic parts and using Proposition \ref{prop:HkPsi} gives the following lemma.
\begin{lemma}[Proposition 4.2 of \cite{BKFourier}]\label{lem:fsplit}
If $f\in \SSnew_{2-\kappa}(N)$, then there exist $z_1,\dots,z_r\in \Gamma_0(N)\backslash\H$ and $a_{\ell,n}\in\C$ such that 
$$
f(\mathfrak{z}) = \sum_{\ell=1}^{r}\sum_{n=0}^{n_\ell} a_{\ell,n} R_{\kappa,z}^{n}\left[H_{\kappa,N}\left(z,\mathfrak{z}\right)\right]_{z=z_{\ell}}.
$$
\end{lemma}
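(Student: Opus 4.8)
The plan is to realize $f$ as the meromorphic part of a polar harmonic Maass form built out of the Poincar\'e series $\mathfrak{z}_2^{\kappa-1}\Psi_{\kappa,-1,N}(z,\mathfrak{z})$ and then to read off that meromorphic part via Proposition \ref{prop:HkPsi}. First I would record that $f\in\SSnew_{2-\kappa}(N)$ is itself a polar harmonic Maass form: being meromorphic it satisfies $\xi_{2-\kappa}(f)=0$, whence $\Delta_{2-\kappa}(f)=-\xi_{\kappa}(\xi_{2-\kappa}(f))=0$, and since its singularities in $\H$ are poles and it decays towards all cusps, we get $f\in\H_{2-\kappa}^{\operatorname{cusp}}(N)$. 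It is distinguished inside this space by having vanishing non-holomorphic part.

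The heart of the argument is the spanning claim stated just before the lemma: as $z_\ell$ runs over $\Gamma_0(N)\backslash\H$ and $n$ over $\N_0$, the functions $\mathfrak{z}\mapsto R_{\kappa,z}^n[\mathfrak{z}_2^{\kappa-1}\Psi_{\kappa,-1,N}(z,\mathfrak{z})]_{z=z_\ell}$ span $\H_{2-\kappa}^{\operatorname{cusp}}(N)$. I would first check that each of these lies in $\H_{2-\kappa}^{\operatorname{cusp}}(N)$: the operator $R_{\kappa,z}$ only differentiates in $z$ and multiplies by a function of $\im z$, so it does not disturb the weight $2-\kappa$ modularity, harmonicity, or cuspidal growth in the variable $\mathfrak{z}$. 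Next, since $z\mapsto\Psi_{\kappa,-1,N}(z,\mathfrak{z})$ has a pole of order one at $z=\mathfrak{z}$, repeated raising increases this order, so evaluation at $z_\ell$ yields a singularity of order $n+1$ at $\mathfrak{z}=z_\ell$; varying $\ell$ and $n$ therefore matches arbitrary principal parts. After subtracting a finite linear combination killing all poles of a prescribed form, what remains is a harmonic Maass form, and I would pin it down through its $\xi_{2-\kappa}$-image: the images of our functions are raised copies of $\Psi_{\kappa,0,N}(\mathfrak{z},z)$, which span $S_{\kappa}(N)$, so the image can be matched as well, leaving a holomorphic form of negative weight bounded at the cusps, which must be zero.

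Granting the spanning claim, I would expand $f=\sum_{\ell,n} b_{\ell,n}R_{\kappa,z}^n[\mathfrak{z}_2^{\kappa-1}\Psi_{\kappa,-1,N}(z,\mathfrak{z})]_{z=z_\ell}$ and compare meromorphic parts. On the left the meromorphic part is $f$ itself, since its non-holomorphic part vanishes. On the right, because $z$ and $\mathfrak{z}$ are independent, the raising $R_{\kappa,z}^n$ in $z$ commutes with the projection onto the meromorphic part in $\mathfrak{z}$, so Proposition \ref{prop:HkPsi} identifies the meromorphic part of each summand with $\frac{\pi}{(2i)^{\kappa-2}}R_{\kappa,z}^n[H_{\kappa,N}(z,\mathfrak{z})]_{z=z_\ell}$; setting $a_{\ell,n}:=\frac{\pi}{(2i)^{\kappa-2}}b_{\ell,n}$ then gives the asserted formula. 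I expect the main obstacle to be the spanning statement of the middle paragraph---concretely, showing that raising the single series $\mathfrak{z}_2^{\kappa-1}\Psi_{\kappa,-1,N}$ and evaluating at points, together with its cuspidal $\xi_{2-\kappa}$-images, genuinely exhausts $\H_{2-\kappa}^{\operatorname{cusp}}(N)$ and not merely a proper subspace.
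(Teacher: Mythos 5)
Your overall strategy is the same as the paper's: regard $f$ as an element of $\H_{2-\kappa}^{\operatorname{cusp}}(N)$ whose non-holomorphic part vanishes, expand it in the family obtained by applying $R_{\kappa,z}^{n}$ to $\mathfrak{z}_2^{\kappa-1}\Psi_{\kappa,-1,N}(z,\mathfrak{z})$ and evaluating at points $z=z_{\ell}$, and then compare meromorphic parts via Proposition \ref{prop:HkPsi}. The paper does exactly this, taking the spanning statement as given (it is established in \cite{BKFourier}); your first and third paragraphs, including the observation that $R_{\kappa,z}^{n}$ commutes with the meromorphic/non-meromorphic splitting in $\mathfrak{z}$, are correct.

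The genuine gap is in the final step of your spanning argument. After subtracting a combination $L_1$ of the candidate functions to remove all poles, you propose to subtract a second combination $L_2$ to match the $\xi_{2-\kappa}$-image of the harmonic remainder, and then conclude that what is left is holomorphic of negative weight, bounded at the cusps, hence zero. But every candidate has poles (its meromorphic part is a nonzero multiple of a raised $H_{\kappa,N}$), so subtracting $L_2$ reintroduces poles: the final difference is then only a meromorphic modular form bounded at the cusps, i.e.\ an element of $\SSnew_{2-\kappa}(N)$, which is precisely the space you are trying to span and is not $\{0\}$, so the concluding implication does not apply. The repair is that no second subtraction is needed: once $h:=\mathcal{F}-L_1$ has no singularities in $\H$ and is bounded at the cusps, Proposition \ref{expansionprod} (with $\kappa=2k$) gives $\{g,h\}=\left<g,\xi_{2-\kappa}(h)\right>=0$ for every $g\in S_{\kappa}(N)$, because all elliptic and cuspidal principal-part coefficients of $h$ vanish; choosing $g=\xi_{2-\kappa}(h)$ forces $\xi_{2-\kappa}(h)=0$, so $h$ is holomorphic of negative weight and bounded at the cusps, hence $h=0$. (A smaller point you glide over: to know that killing the poles of the meromorphic part leaves a form with no singularities at all, you need that the non-meromorphic part of any element of $\H_{2-\kappa}^{\operatorname{cusp}}(N)$ is non-singular; this follows from Proposition \ref{prop:ellexp}(2), since that part only involves terms $\beta_0\left(1-r_{\mathfrak{z}}^2(z);\kappa-1,-n\right)X_{\mathfrak{z}}^n(z)$ with $n<0$, which remain bounded as $z\to\mathfrak{z}$.)
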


Pioline pointed out an alternative way to obtain a basis of polar harmonic Maass forms which are bounded towards the cusps, namely using Fay's \cite{Fay} polar harmonic Poincar\'e series in negative weight.  The Poincar\'e series $\Psi_{\kappa,n,N}$ are formed by slashing the $n$th coefficient of the meromorphic part of the elliptic expansion \eqref{elliptic}.  Using the non-meromorphic analogue 
\begin{equation*}
\psi_{2-2k,n}(z,\mathfrak{z}):=\left(z-\overline{\mathfrak{z}}\right)^{2k-2} \beta \left( 1-r_{\mathfrak{z}}^2(z) ;2k-1, -n\right)X_{\mathfrak{z}}^n(z),
\end{equation*}
one may define the weight $2-2k<0$ Poincar\'e series
$$
\mathbb{P}_{2-2k,n,N}( z,\mathfrak{z}) := \sum_{M\in \Gamma_0(N)}\psi_{2-2k,n}(z,\mathfrak{z})\Big|_{2-2k,z} M.
$$
These Poincar\'e series appear as a special case of (44) in \cite{Fay} and they are connected to Petersson's Poincar\'e series.  The following essentially follows by Theorem 2.1 of \cite{Fay} (the notation for these Poincar\'e series is different in \cite{BJK}, because $\mathfrak{z}$ was only considered to be a parameter there)
\begin{theorem}[Theorem 4.3 of \cite{BJK}, for $N=1$]\label{thm:Poincmain}
For $k>1$, the Poincar\'e series $\mathbb{P}_{2-2k,n,N}(z,\mathfrak{z})$ converge absolutely and locally uniformly.  The functions $z\mapsto \mathbb{P}_{2-2k,n,N}(z,\mathfrak{z})$ span the space of weight $2-2k$ polar harmonic Maass forms on $\Gamma_0(N)$ which are bounded in all cusps.  Furthermore, 
\begin{align}\label{eqn:xiPoincn}
\xi_{2-2k,z}\left(\mathbb{P}_{2-2k,n,N}(z,\mathfrak{z})\right)&=(4\mathfrak{z}_2)^{2k-1}\Psi_{2k,-n-1,N}(z,\mathfrak{z}),\\
\label{eqn:DPoinc} \mathcal{D}_z^{2k-1}\left(\mathbb{P}_{2-2k,n,N}(z,\mathfrak{z})\right)&=-(2k-2)!\left(\frac{\mathfrak{z}_2}{\pi}\right)^{2k-1}\Psi_{2k,n+1-2k,N}(z,\mathfrak{z}).
\end{align}
The principal parts of $\mathbb{P}_{2-2k,n,N}$ are given by
$$
\begin{cases}
2\omega_{\mathfrak{z},N}\left(z-\overline{\mathfrak{z}}\right)^{2k-2}\beta_0\left(1-r_{\mathfrak{z}}^{2}(z);2k-1,-n\right)X_{\mathfrak{z}}^{n}(z)&\text{if }n\geq 0\text{ and }n\equiv k-1 \pmod {\omega_{\mathfrak{z},N}},\\
2\omega_{\mathfrak{z},N}\mathcal{C}_{2k-1,-n} \left(z-\overline{\mathfrak{z}}\right)^{2k-2}X_{\mathfrak{z}}^{n}(z)&\text{if }n<0\text{ and }n\equiv k-1 \pmod {\omega_{\mathfrak{z},N}},\\
0&\text{otherwise.}
\end{cases}
$$
Moreover, the functions $\mathfrak{z}\mapsto \mathfrak{z}_2^{2-2k+n} \mathbb{P}_{2-2k,n,N}(z,\mathfrak{z})$ satisfy weight $2k-2-2n$ modularity and are eigenfunctions under $\Delta_{2k-2-2n,\mathfrak{z}}$ with eigenvalue $(2k-2-n)(n+1)$.  

\end{theorem}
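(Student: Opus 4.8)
The plan is to realize $z\mapsto\mathbb{P}_{2-2k,n,N}(z,\mathfrak{z})$ as a $\Gamma_0(N)$-average of the elliptic building block from Proposition \ref{prop:ellexp} and then transport all structural information through the operators $\xi_{2-2k,z}$ and $\mathcal{D}_z^{2k-1}$. First I would observe that the seed $\psi_{2-2k,n}(z,\mathfrak{z})$ differs from the non-meromorphic term $(z-\overline{\mathfrak{z}})^{2k-2}\beta_0(1-r_{\mathfrak{z}}^2(z);2k-1,-n)X_{\mathfrak{z}}^n(z)$ of \eqref{elliptic} (with $\kappa=2-2k$) only by the meromorphic term $\mathcal{C}_{2k-1,-n}(z-\overline{\mathfrak{z}})^{2k-2}X_{\mathfrak{z}}^n(z)$; since both are annihilated by $\Delta_{2-2k,z}$ by the very construction of \eqref{elliptic}, so is $\psi_{2-2k,n}$. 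For convergence I would combine the identity $1-r_{\mathfrak{z}}^2(Mz)=4\mathfrak{z}_2\,\im(Mz)/|Mz-\overline{\mathfrak{z}}|^2$ with the estimate $\beta(v;2k-1,-n)=O(v^{2k-1})$ as $v\to0$ to see that, up to a bounded factor, the general term of $\mathbb{P}_{2-2k,n,N}$ is dominated by $|cz+d|^{-2k}|Mz-\overline{\mathfrak{z}}|^{-2k}$; hence the series is majorized by a convergent weight $2k$ elliptic Poincar\'e series (absolute convergence for $k>1$ being classical), with the finitely many cosets carrying $Mz$ near $\mathfrak{z}$ contributing only a locally integrable singularity. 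Given convergence, weight $2-2k$ modularity in $z$ is automatic from the averaging, and since $\Delta_{2-2k}$ commutes with the slash action the sum remains annihilated by $\Delta_{2-2k}$, so $z\mapsto\mathbb{P}_{2-2k,n,N}(z,\mathfrak{z})$ is a weight $2-2k$ polar harmonic Maass form.

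Both image formulas I would establish termwise, using that $\xi_{2-2k,z}$ and $\mathcal{D}_z^{2k-1}$ commute with $|_{2-2k,z}M$; for $\mathcal{D}^{2k-1}$ this commutation is Bol's identity, which moreover lets me replace $\mathcal{D}_z^{2k-1}$ by a constant times the iterated raising $R_{2-2k,z}^{2k-1}$. A direct differentiation of the seed then yields, on the one hand, the summand $\tfrac{(z-\mathfrak{z})^{-n-1}}{(z-\overline{\mathfrak{z}})^{2k-n-1}}$ of $\Psi_{2k,-n-1,N}$ and, on the other, the summand of $\Psi_{2k,n+1-2k,N}$; summing over $\Gamma_0(N)$ produces \eqref{eqn:xiPoincn} and \eqref{eqn:DPoinc}, the only genuine work being to pin down the constants $(4\mathfrak{z}_2)^{2k-1}$ and $-(2k-2)!\,(\mathfrak{z}_2/\pi)^{2k-1}$.

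The principal parts and the boundedness at the cusps I would read off from a local analysis near a singular point $\mathfrak{z}'\in\Gamma_0(N)\mathfrak{z}$. The cosets fixing $\mathfrak{z}'$ form the stabilizer $\Gamma_{\mathfrak{z},N}$ of order $\omega_{\mathfrak{z},N}$; summing the seed over this finite group produces the overall factor $2\omega_{\mathfrak{z},N}$, forces the congruence $n\equiv k-1\pmod{\omega_{\mathfrak{z},N}}$ (the singular contribution cancelling otherwise), and recombines $\beta$ with $\mathcal{C}_{2k-1,-n}$ into the stated $\beta_0$-expression when $n\geq0$ and into the pure pole $\mathcal{C}_{2k-1,-n}(z-\overline{\mathfrak{z}})^{2k-2}X_{\mathfrak{z}}^n(z)$ when $n<0$; all remaining cosets are regular at $\mathfrak{z}'$. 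Expanding the series at each cusp then shows that no Fourier term grows, giving boundedness there.

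For the spanning statement I would take an arbitrary weight $2-2k$ polar harmonic Maass form $\mathcal{F}$ that is bounded at all cusps and invoke Proposition \ref{prop:ellexp}(2): at each of the finitely many singular points in $\Gamma_0(N)\backslash\H$ its elliptic expansion runs over exactly the residues $n\equiv k-1\pmod{\omega_{\mathfrak{z},N}}$ realized by the principal parts just computed. Subtracting a finite linear combination of the $\mathbb{P}_{2-2k,n,N}$ --- the $n<0$ series to cancel the poles, the $n\geq0$ series to cancel the non-meromorphic singular terms --- removes every principal part and leaves a genuine harmonic Maass form $\mathcal{G}\in H_{2-2k}(N)$ bounded at all cusps. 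Boundedness forces the coefficient $c^-_{\mathcal{G},\varrho}(0)$ of each non-holomorphic part to vanish, so $\xi_{2-2k}(\mathcal{G})\in S_{2k}(N)$; since by \eqref{eqn:xiPoincn} the images $\xi_{2-2k}(\mathbb{P}_{2-2k,n,N})$ with $n\leq-1$ run through the cusp forms $\Psi_{2k,j,N}$ with $j\geq0$, which span $S_{2k}(N)$, a further subtraction arranges $\xi_{2-2k}(\mathcal{G})=0$, whence $\mathcal{G}$ is weakly holomorphic of negative weight $2-2k<0$ and bounded at the cusps, hence $\mathcal{G}\equiv0$. Finally, the weight $2k-2-2n$ modularity and the $\Delta_{2k-2-2n,\mathfrak{z}}$-eigenvalue $(2k-2-n)(n+1)$ of $\mathfrak{z}\mapsto\mathfrak{z}_2^{2-2k+n}\mathbb{P}_{2-2k,n,N}(z,\mathfrak{z})$ follow by computing the transformation and the Laplace eigenvalue of the seed in the variable $\mathfrak{z}$ and passing them through the ($\mathfrak{z}$-independent) averaging over $\Gamma_0(N)$, as in Theorem 2.1 of \cite{Fay}. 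I expect the spanning step to be the main obstacle: one must simultaneously realize every admissible principal part (with the congruence bookkeeping) and then dispose of the residual harmonic form using surjectivity of $\xi_{2-2k}$ onto $S_{2k}(N)$ together with the vanishing of bounded negative-weight weakly holomorphic forms.
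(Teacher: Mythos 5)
Most of your proposal is sound, and it is in fact a more self-contained route than the paper's: the paper proves convergence, harmonicity in both variables, the eigenvalue statement, and the raising formulas by identifying $\mathbb{P}_{2-2k,n,N}$ with Fay's resolvent Poincar\'e series $y^{k-1}G_{k,1-k}^{-n,0}(\mathfrak{z},z)$ and importing Theorem 2.1 and formula (45) of \cite{Fay}, whereas you argue directly: majorization of the series by the absolutely convergent weight $2k$ elliptic Poincar\'e series (your estimates $1-r_{\mathfrak{z}}^2(Mz)=4\mathfrak{z}_2\,\text{Im}(Mz)/|Mz-\overline{\mathfrak{z}}|^2$ and $\beta(v;2k-1,-n)=O(v^{2k-1})$ are correct), termwise application of $\xi_{2-2k,z}$ and, via Bol, of $\mathcal{D}_z^{2k-1}$ (indeed $\xi_{2-2k,z}$ of the seed equals $(4\mathfrak{z}_2)^{2k-1}(z-\mathfrak{z})^{-n-1}(z-\overline{\mathfrak{z}})^{n+1-2k}$, the seed of $\Psi_{2k,-n-1,N}$), and extraction of the principal part from the stabilizer cosets. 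All of that matches the theorem.

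The genuine gap is in your spanning argument, at the ``further subtraction'' step. After the first subtraction you have a harmonic $\mathcal{G}\in H_{2-2k}(N)$, bounded at all cusps, with $\xi_{2-2k}(\mathcal{G})\in S_{2k}(N)$, and you want to subtract series $\mathbb{P}_{2-2k,n,N}$ with $n\le -1$ to force $\xi_{2-2k}(\mathcal{G})=0$. But by \eqref{eqn:xiPoincn} together with the elliptic-expansion congruence of Proposition \ref{prop:ellexp}(2), the image $\Psi_{2k,-n-1,N}(\cdot,\mathfrak{z})$ vanishes identically unless $-n-1\equiv -k\pmod{\omega_{\mathfrak{z},N}}$, i.e.\ unless $n\equiv k-1\pmod{\omega_{\mathfrak{z},N}}$ --- which is exactly the condition under which $\mathbb{P}_{2-2k,n,N}(\cdot,\mathfrak{z})$ has a nonvanishing pole (the constant $\mathcal{C}_{2k-1,-n}$ is $\pm$ a Beta value, hence nonzero). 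So every series that changes the $\xi$-image reintroduces a pole in $\H$, and the function you end up with is not weakly holomorphic but a meromorphic modular form of weight $2-2k$ with poles in the upper half-plane and bounded at the cusps; such forms need not vanish (for $N=1$, e.g.\ $\Delta/E_4^{5}$ has weight $-8$, vanishes at $i\infty$, and has a pole at $\rho$). Hence your conclusion $\mathcal{G}\equiv 0$ does not follow. The missing ingredient is the Bruinier--Funke pairing, Proposition \ref{expansionprod}: since $\mathcal{G}$ has no singularities in $\H$ and no principal part at any cusp, every term on the right-hand side of \eqref{eqn:gFpair} vanishes, so $\left\|\xi_{2-2k}(\mathcal{G})\right\|^2=\{\xi_{2-2k}(\mathcal{G}),\mathcal{G}\}=0$, whence $\xi_{2-2k}(\mathcal{G})=0$ automatically; then $\mathcal{G}$ is a holomorphic modular form of negative weight and therefore vanishes. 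With this replacement, your first subtraction already completes the spanning proof and no second subtraction is needed.
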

\begin{remark}
For $n<0$, the functions $z\mapsto R_{2k,\mathfrak{z}}^{-n-1}(H_{2k,N}(\mathfrak{z},z))$ from Lemma \ref{lem:fsplit} are constant multiples of the meromorphic parts of $\mathfrak{z}_2^{2-2k+n}\mathbb{P}_{2-2k,n,N}(z,\mathfrak{z})$.  
\end{remark}
\begin{proof}[Sketch of proof]
In order to show that $\mathfrak{z}_2^{2-2k+n}\mathbb{P}_{2-2k,n,N}(z,\mathfrak{z})$ is a polar harmonic Maass form as a function of $z$ and a polar Maass form as a function of $\mathfrak{z}$, we rewrite the Poincar\'e series in terms of functions defined in \cite{Fay}.  In particular, one can show that the $\mathbb{P}_{2-2k,n,N}$ are constant multiples of the Poincar\'e series $\mathfrak{z}_2^{k-1}y^{k-1}G_{k,1-k}^{-n,0}(\mathfrak{z},z)$ given in (44) of \cite{Fay}. Combining with the results in Theorem 2.1 of \cite{Fay} and rewriting Fay's differential operator $D_{1-k}$ in terms of $\Delta_{2-2k}$ yields that the functions $\mathfrak{z}_2^{2-2k+n}\mathbb{P}_{2-2k,n,N}(z,\mathfrak{z})$ are polar Maass forms in both variables.  
Furthermore, the eigenvalues in each variable are also explicitly given in Theorem 2.1 of \cite{Fay}.  Its principal part comes from the terms with $M\in \Gamma_{\mathfrak{z},N}$.  

It remains to prove \eqref{eqn:xiPoincn} and \eqref{eqn:DPoinc}.  The argument for both is similar, so we only give an idea of proof of \eqref{eqn:DPoinc}.  By (45) of \cite{Fay}, applying $R_{2-2k,z}^{2k-1}$ to $y^{k-1}G_{k,1-k}^{-n,0}(\mathfrak{z},z)$ yields a constant times $y^{-k}G_{k,1-k}^{-n,2k-1}(\mathfrak{z},z)$.  One can show that this is a constant multiple of Petersson's  $\Psi_{2k,n+1-2k,N}(z,\mathfrak{z})$.  Bol's identity then implies \eqref{eqn:DPoinc}. 
\end{proof}

We next discuss a splitting of weight $2k$ meromorphic cusp forms which is motivated by combining \eqref{eqn:xiPoincn} and \eqref{eqn:DPoinc}.  For the case $k=1$, which is omitted here, recall that weight $2$ meromorphic modular forms $f$ are in one-to-one correspondence with meromorphic differentials $f(z)dz$.  Traditionally, there is a splitting of meromorphic differentials into three kinds: the first kind comes from cusp forms, the second coincides under the bijection above with meromorphic forms that are orthogonal to cusp forms and have trivial residues, and the third kind corresponds to those with at most simple poles.  For weakly holomorphic forms, the space of forms yielding differentials of the third kind is precisely the space spanned by Eisenstein series.  Generalizing this to higher weight, the space of Eisenstein series is distinguished in a number of ways.  Concentrating on one of those, 
for each Eisenstein series 
there exists a weight $2-2k$ harmonic Maass form which maps to a constant multiple of 
it under both $\xi_{2-2k}$ and $D^{2k-1}$. One sees from \eqref{eqn:xiPoincn} and \eqref{eqn:DPoinc} that the forms $\Psi_{2k,-k,N}$ share this property, yielding a meromorphic analogue of the Eisenstein series.  Since Eisenstein series correlate with differentials of the third kind, it seems reasonable to view the functions $\Psi_{2k,-k,N}$ as the generalization to higher weight of meromorphic cusp forms corresponding to differentials of the third kind.   Based on the Poincar\'e series \eqref{eqn:weakPoinc}, the Eisenstein series split the space of weight $2k$ weakly holomorphic forms into three subspaces, those spanned by the Poincar\'e series with $m>0$, $m<0$, and $m=0$, respectively.  The space spanned by the Poincar\'e series with $m>0$ is precisely the space of cusp forms and extending to weight $2$ via analytic continuation, this  matches the splitting of meromorphic differentials.  Furthermore, if $\mathcal{F}\in H_{2-2k}^{\operatorname{cusp}}(N)$, then $D^{2k-1}(\mathcal{F})$ is in the space spanned by forms with $m<0$, so the differential operators naturally split the space.  The spaces spanned by forms with $m<0$ and $m>0$ are also orthogonal under a suitably regularized inner product.

Paralleling this for meromorphic cusp forms, one can also split the space into three pieces. Let $\SSnew_{2k}^{\mathfrak{z}}(N)$ denote the subspace of $\SSnew_{2k}(N)$ allowing poles at most at $\mathfrak{z}$.  We then write
$$
\SSnew_{2k}^{\mathfrak{z}}(N) = \mathbb{X}_{2k}^{\mathfrak{z}}(N)+\mathbb{D}_{2k}^{\mathfrak{z}}(N)+\mathbb{E}_{2k}^{\mathfrak{z}}(N),
$$
where $\mathbb{X}_{2k}^{\mathfrak{z}}(N)$ is spanned by those forms with poles of order at most $k-1$, $\mathbb{E}_{2k}^{\mathfrak{z}}(N)$ is spanned by the form $\Psi_{2k,-k,N}(z,\mathfrak{z})$, and $\mathbb{D}_{2k}^{\mathfrak{z}}(N)$ is spanned by $\Psi_{2k,n,N}(z,\mathfrak{z})$ with $n<-k$.  
One can show that 
if $\mathcal{F}\in\H_{2-2k}(N)$ satisfies $\xi_{2-2k}(\mathcal{F})\in \bigoplus_{\mathfrak{z}}\mathbb{X}_{2k}^{\mathfrak{z}}(N)$, then $\mathcal{D}^{2k-1}(\mathcal{F})\in \bigoplus_{\mathfrak{z}}\mathbb{D}_{2k}^{\mathfrak{z}}$.  
Hence, motivated by the differential operators and the fact that $\mathbb{E}_{2k}^{\mathfrak{z}}(N)$ parallels the Eisenstein series, one may ask whether, under a suitable regularized inner product,
$$
\mathbb{X}_{2k}^{\mathfrak{z}}\perp\mathbb{D}_{2k}^{\mathfrak{z}}.
$$
Jenkins and the authors are currently investigating this question using an inner product which is defined in Section \ref{sec:regularize}.

\subsection{Other representations of $\Psi_{2k,-1}(z,\mathfrak{z})$}
Note that the polar harmonic Maass form $F_z(\mathfrak{z}):= \mathfrak{z}_2^{2k-1}\Psi_{2k,-1}(z,\mathfrak{z})$ may be presented in a few other ways.  Firstly, since $\mathbb{P}_{2-2k,-1}(\mathfrak{z},z)$ has the same principal part as $F_{z}(\mathfrak{z})$ up to a constant multiple and its weight is negative, the two functions must agree up to a constant. Alternatively, a direct calculation, using (23) of \cite{Pe2} to flip the slash operator between the two variables, yields
$$
F_z(\mathfrak{z})=\frac{2k-1}{\left(4y\right)^{2k-1} }\mathbb{P}_{2-2k,-1}(\mathfrak{z},z).
$$
One may also explicitly realize $F_{z}(\mathfrak{z})$ in terms of the functions $H_{2k}$ via (see Proposition 3.2 of \cite{BKFourier}) 
\begin{equation*}
F_z(\mathfrak{z}) =\frac{\pi}{(-4)^{k-1}}\mathcal{H}_{2k}(z,\mathfrak{z}),
\end{equation*}
where
\begin{equation*}
\mathcal{H}_{2k}\left(z,\mathfrak{z}\right) := H_{2k}(z,\mathfrak{z}) + \sum_{r=0}^{2k-2}\frac{\left(2i\mathfrak{z}_2\right)^r }{r!}\frac{\partial^r}{\partial\overline{\mathfrak{z}}^r} H_{2k}\left(z,\overline{\mathfrak{z}}\right).
\end{equation*}
Furthermore, for $y>\max(\mathfrak{z}_2,1/\mathfrak{z}_2)$, 
(3.9) of \cite{BKFourier} states that $\mathcal{H}_{2k}(z,\mathfrak{z})$ may be written as the generating function 
\begin{equation*}
\mathcal{H}_{2k}\left(z,\mathfrak{z}\right)=-2\sum_{n=1}^{\infty}\mathcal{F}_{2-2k,n}(\mathfrak{z}) e^{2\pi i nz}
\end{equation*}

\section{Fourier coefficients of meromorphic modular forms}\label{sec:Fourier}

Before discussing the coefficients of weight $2-2k<0$ meromorphic cusp forms, we recall some of the history of the Fourier coefficients of weakly holomorphic modular forms.  Hardy and Ramanujan \cite{HR1,HR2} proved, using the Circle Method, that, as $n\to\infty$,
\begin{equation}\label{eqn:partitiongrowth}
p(n)\sim \frac{1}{4n\sqrt{3}} e^{\pi \sqrt{  \frac{2n}3}  }.
\end{equation}
Rademacher \cite{Rad} then perfected this method to derive the exact formula
\begin{equation}\label{Radformula}
p(n)= 2 \pi (24n-1)^{-\frac{3}{4}} \sum_{j =1}^{\infty}\frac{A_j(n)}{j} I_{\frac{3}{2}}\left( \frac{\pi\sqrt{24n-1}}{6j}\right).
\end{equation}
Here $I_{\ell}(x)$ is the $I$-Bessel function of order $\ell$ and $A_j(n)$ denotes a certain Kloosterman sum. These asymptotic formulas use the fact that the generating 
function
\rm
\begin{displaymath}
P(q):=\sum_{n=0}^{\infty}p(n)q^n=\prod_{n=1}^{\infty}(1-q^n)^{-1},
\end{displaymath}
is essentially a weight $-1/2$ weakly holomorphic modular form.  As mentioned in the introduction, Rademacher and Zuckerman  \cite{RZ,Zu1,Zu2} obtained similar formulas for all 
negative-weight weakly holomorphic modular forms.

We next consider Fourier coefficients of meromorphic cusp forms, starting with Ramanujan's conjectured formula for $1/E_4$ which was later proven by Bialek \cite{Bi}. 
In this case, we have, for $y>\sqrt{3}/2$, 
\begin{equation*}
 \frac{1}{E_4(z)} = \sum_{n=0}^{\infty} \beta_n e^{2\pi i  n z}
\end{equation*}
with 
\begin{equation}\label{eqn:1/E_4}
 \beta_n :=  \frac{6}{E_6(\rho)}\sum_{(\lambda)}\sum_{(c,d)} \frac{h_{(c,d)}(n)}{\lambda^3} e^{\frac{\pi n \sqrt{3}}{\lambda}}.
\end{equation}
Here $\rho:=e^{\frac{\pi i}{3}}$, $(c,d)$ runs over distinct solutions to $\lambda=c^2-cd+d^2$, where $\lambda=3^a \prod_{j=1}^{r}p_j^{a_j}$ with $a\in\{0,1\}, p_j$ denoting primes of the form $6m+1$, and $a_j \in \N_0$.  We do not  give the definition of distinct here, but the interested reader may find it after (2.2.3) of \cite{Bi}.  
Finally, we let $h_{(1,0)}(n):=(-1)^{n}/2$, $h_{(2,1)}(n):=1/2$, and for $\lambda \geq 7$
\begin{equation*}
 h_{(c,d)}(n):=   \cos\left( (ad+bc -2ac-2bd )\frac{\pi n}{\lambda}-6 \arctan\left(\frac{c\sqrt{3}}{2d-c}\right)\right),
\end{equation*}
where $a,b\in \Z$ are any choices for which $ad-bc=1$.  
It turns out that the Fourier coefficients of all of the specific meromorphic modular forms investigated by Ramanujan may be written as linear combinations of the series
\begin{equation}\label{Fklr}
F_{\kappa,j,r}(\mathfrak{z},z):= \mathfrak{z}_2^{-j}\sum_{m=0}^{\infty} \;\sideset{}{^*}\sum_{\mathfrak{b}\subseteq\mathcal{O}_{\Q(\mathfrak{z})}} \frac{C_{\kappa}\left(\mathfrak{b},m\right)}{N(\mathfrak{b})^{\frac{\kappa}{2}-j}} (4\pi m)^r e^{\frac{2\pi  m\mathfrak{z}_2}{N(\mathfrak{b})}} e^{2\pi i m z},
\end{equation}
where $\mathfrak{z}\in \{i,\rho\}$, $\mathfrak{b}$ runs over primitive ideals, $N(\mathfrak{b})$ is the norm, and $C_{\kappa}$ are certain functions on ideals which we next describe.  
For $\mathfrak{b}=(c\rho+d)\subset\mathcal{O}_{\Q(\rho)}$, we define 
\begin{equation*}
C_{6m}\left(\mathfrak{b},n\right) := 
\cos\left(\left(ad+bc-2ac-2bd\right) \frac{\pi n}{N(\mathfrak{b})}-6m\arctan\left(\frac{c\sqrt{3}}{2d-c}\right)\right),
\end{equation*}
and we set $C_{m}(\mathfrak{b},n):=0$ if $6\nmid m$.
 Similarly, for $\mathfrak{b}=(ci+d)\subseteq\mathcal{O}_{\Q(i)}$, we let
\begin{equation*}
C_{4m}\left(\mathfrak{b},n\right):=\cos\left(\left(ac+bd\right)\frac{2\pi n}{N(\mathfrak{b})}+4m\arctan\left(\frac{c}{d}\right)\right)
\end{equation*}
and $C_{m}(\mathfrak{b},n):=0$ if $4\nmid m$.

After analyzing a number of examples, Berndt, Bialek, and Yee \cite{BeBiYe} recognized certain similarities between the coefficients.  
The pattern they noticed is much more general.  Indeed, every negative weight meromorphic cusp whose only pole in the standard fundamental domain $\mathcal{F}$ occurs at $z_0\in\{i,\rho\}$ has an 
expansion as a linear combination of the functions defined in \eqref{Fklr}.  
\begin{theorem}[Theorem 1.2 of \cite{BKFourier}]\label{Fouriercoefficients}
If $z_0\in\{i,\rho\}$ and $a_n\in\C$ are chosen such that 
\begin{equation}\label{eqn:fsplit}
f(z) = \sum_{n=0}^{n_0} a_{n} R_{2k,\mathfrak{z}}^{n}\left[H_{2k}\left(\mathfrak{z},z\right)\right]_{\mathfrak{z}=z_{0}}
\in \SSnew_{2-2k},
\end{equation}
then, for $y>y_0$, we have the Fourier expansion 
$$
f(z) = 2\omega_{z_0}\sum_{n=0}^{n_0} a_{n} \sum_{j=0}^{n} \frac{(2k+n-1)!}{(2k+n-1-j)!}\binom{n}{j} F_{2k+2n,j,n-j}(z_0,z).
$$
\end{theorem}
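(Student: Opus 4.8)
The plan is to compute, for $y$ large, the Fourier expansion in $z$ of each individual summand $G_n(z):=R_{2k,\mathfrak{z}}^{n}[H_{2k}(\mathfrak{z},z)]_{\mathfrak{z}=z_0}$ and then to take the $a_n$-linear combination. Writing $H_{2k}(\mathfrak{z},z)$ as the Poincar\'e series $2\sum_{M\in\Gamma_{\infty}\backslash\slz}(1-e^{2\pi i(z-\mathfrak{z})})^{-1}\big|_{2k,\mathfrak{z}}M$ and expanding the geometric series $(1-e^{2\pi i(z-\mathfrak{z})})^{-1}=\sum_{m\geq 0}e^{2\pi i m z}e^{-2\pi i m\mathfrak{z}}$, which converges once $y>y_0:=\im(z_0)=\sup_{M}\im(Mz_0)$, one reads off that the coefficient of $e^{2\pi i m z}$ in $H_{2k}(\mathfrak{z},z)$ equals $2\sum_{M}(e^{-2\pi i m\,\cdot})\big|_{2k,\mathfrak{z}}M$, where $e^{-2\pi i m\,\cdot}$ denotes the function $\mathfrak{z}\mapsto e^{-2\pi i m\mathfrak{z}}$. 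The local uniform convergence supplied by Theorem~\ref{thm:Poincmain} justifies applying $R_{2k,\mathfrak{z}}^{n}$ term by term.

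Next I would push the raising operator through the slash. Since $R_{\kappa}$ is equivariant, $R_{2k,\mathfrak{z}}^{n}(g\big|_{2k,\mathfrak{z}}M)=(R_{2k,\mathfrak{z}}^{n}g)\big|_{2k+2n,\mathfrak{z}}M$, so it suffices to raise the single exponential. A short induction from $R_{\kappa,\mathfrak{z}}(e^{-2\pi i m\mathfrak{z}})=(4\pi m+\kappa\mathfrak{z}_2^{-1})e^{-2\pi i m\mathfrak{z}}$ gives
$$R_{2k,\mathfrak{z}}^{n}\left(e^{-2\pi i m\mathfrak{z}}\right)=\sum_{j=0}^{n}\binom{n}{j}\frac{(2k+n-1)!}{(2k+n-1-j)!}(4\pi m)^{n-j}\mathfrak{z}_2^{-j}\,e^{-2\pi i m\mathfrak{z}}.$$
This already produces exactly the numerical factors $\frac{(2k+n-1)!}{(2k+n-1-j)!}\binom{n}{j}$, the power $(4\pi m)^{n-j}$, and the prefactor $\mathfrak{z}_2^{-j}$ appearing in the statement, with the index $r=n-j$ of $F_{2k+2n,j,n-j}$ coming from the exponent of $4\pi m$ and the first subscript $2k+2n$ from the raised weight.

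It then remains to evaluate the slashed sum at the CM point $\mathfrak{z}=z_0$ and to recognize it as the ideal sum defining $F_{2k+2n,j,n-j}(z_0,z)$. Here I would reindex the cosets $M\in\Gamma_{\infty}\backslash\slz$, whose lower rows $\pm(c,d)$ with $\gcd(c,d)=1$ correspond to primitive ideals $\mathfrak{b}=(cz_0+d)\subseteq\mathcal{O}_{\mathbb{Q}(z_0)}$, and track three factors separately: the identity $\im(Mz_0)=\mathfrak{z}_2/N(\mathfrak{b})$ turns $e^{-2\pi i m M z_0}$ into the real exponential $e^{2\pi m\mathfrak{z}_2/N(\mathfrak{b})}$ times the phase $e^{-2\pi i m\re(Mz_0)}$; the slashed $\mathfrak{z}_2^{-j}$ contributes $|cz_0+d|^{2j}=N(\mathfrak{b})^{j}$, which combines with $(cz_0+d)^{-2k-2n}$ to give the modulus $N(\mathfrak{b})^{j-\kappa/2}$ with $\kappa=2k+2n$; and the remaining phase $(cz_0+d)^{-\kappa}/|cz_0+d|^{-\kappa}=e^{-i\kappa\arg(cz_0+d)}$, together with $e^{-2\pi i m\re(Mz_0)}$, collapses after pairing $\mathfrak{b}$ with its conjugate into the cosine $C_{\kappa}(\mathfrak{b},m)$. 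Using $\re(Mi)=(ac+bd)/N(\mathfrak{b})$ and $\arg(ci+d)=\arctan(c/d)$ for $z_0=i$, and the analogous expressions at $z_0=\rho$, matches precisely the defining cosines $C_{4m}$ and $C_{6m}$, while the stabilizer $\Gamma_{z_0}$ in $\operatorname{PSL}_2(\Z)$ accounts for the overall factor $\omega_{z_0}$ and for the primitivity restriction $*$ in the ideal sum.

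I expect this last paragraph to be the main obstacle: the bookkeeping that identifies the coset sum at $z_0$ with the primitive-ideal sum, extracts the cosine with the exact arctan argument, and yields the correct constant $2\omega_{z_0}$ together with the primitivity condition is the delicate arithmetic heart of the argument, whereas the geometric expansion and the raising-operator identity are routine. Assembling the three factors gives $G_n(z)=2\omega_{z_0}\sum_{j=0}^{n}\frac{(2k+n-1)!}{(2k+n-1-j)!}\binom{n}{j}F_{2k+2n,j,n-j}(z_0,z)$, and summing $\sum_n a_n G_n=f$ produces the claimed expansion.
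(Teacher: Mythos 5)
Your proposal is correct and is essentially the paper's own argument: reducing by linearity to the expansion of each $R_{2k,\mathfrak{z}}^{n}\left[H_{2k}(\mathfrak{z},z)\right]_{\mathfrak{z}=z_0}$, your raising computation $R_{2k,\mathfrak{z}}^{n}\left(e^{-2\pi i m\mathfrak{z}}\right)=\sum_{j}\binom{n}{j}\frac{(2k+n-1)!}{(2k+n-1-j)!}(4\pi m)^{n-j}\mathfrak{z}_2^{-j}e^{-2\pi i m\mathfrak{z}}$ is precisely the Fourier-coefficient form of the identity the paper invokes as Proposition 4.5 of \cite{BKFourier}, and your final identification of the coset sums at $z_0$ with primitive-ideal sums is exactly the content of Theorem 3.1 of \cite{BKRamanujan}, the paper's last cited ingredient. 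Two minor points: the convergence needed to expand geometrically and raise termwise should be attributed to the classical absolute locally uniform convergence of the weight $2k>2$ Poincar\'e series $H_{2k}$ itself (Theorem \ref{thm:Poincmain} concerns the different series $\mathbb{P}_{2-2k,n,N}$), and the paper's additional first step---realizing the raised $H_{2k}$'s as meromorphic parts of a basis of $\mathcal{H}_{2-2k}^{\operatorname{cusp}}$---is context for Lemma \ref{lem:fsplit} rather than something your linearity argument is missing.
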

\begin{remarks}
\noindent
\begin{enumerate}[leftmargin=*]
\item
By Lemma \ref{lem:fsplit}, every element of $\SSnew_{2-2k}$ which has a unique pole in $\SL_2(\Z)\backslash\H$ also has 
an expansion of the type in \eqref{eqn:fsplit}.
\item
 A more general formula for meromorphic cusp forms with poles at arbitrary points is given in Theorem 4.1 of \cite{BKFourier}. 
\item
It was shown in Theorem 1.3 of \cite{BKFourier} that \begin{it}quasi-meromorphic cusp forms\end{it}, which are products of powers of the weight 2 quasi-modular Eisenstein series $E_2$ times meromorphic cusp forms, also have Fourier expansions of this shape. 
\end{enumerate}
\end{remarks}
\begin{proof}[Sketch of proof]
The proof of Theorem \ref{Fouriercoefficients} has essentially 3 steps.  Firstly, one constructs  a basis for the space $\H_{2-2k}^{\operatorname{cusp}}$ by applying repeated raising in $\mathfrak{z}$ to $\mathcal{H}_{2k}(\mathfrak{z},z)$.  
One then determines that the meromorphic parts of the resulting basis elements are the forms from Lemma \ref{lem:fsplit}.  
Noting that $f$ is meromorphic, the problem is hence reduced to computing the Fourier coefficients of $R_{2k,\mathfrak{z}}^{n}\left[H_{2k}\left(\mathfrak{z},z\right)\right]_{\mathfrak{z}=z_{0}}$.  By Proposition 4.5 of \cite{BKFourier}, 
we may rewrite
\begin{equation*}
R_{2k,\mathfrak{z}}^{n}\left(H_{2k}(\mathfrak{z},z)\right)=\sum_{j=0}^{n} \frac{(2k+n-1)!}{(2k+n-1-j)!}\binom{n}{j} (-2i)^{n-j} \frac{\partial^{n-j}}{\partial z^{n-j}} H_{2k+2n,j}(\mathfrak{z},z),
\end{equation*}
where
\begin{equation*}
H_{2k,j}(\mathfrak{z},z):=
2
\sum_{M\in \Gamma_{\infty}\backslash \SL_2(\Z)}\frac{\mathfrak{z}_2^{-j}}{1-e^{2\pi i (z-\mathfrak{z})}}\bigg|_{2k,\mathfrak{z}} M.
\end{equation*}
The coefficients of $H_{2k,j}$ were computed in Theorem 3.1 of \cite{BKRamanujan}, which is the last step.
\end{proof}

To use the result in explicit examples, one only needs to compute the principal part of a given meromorphic cusp form to determine its representation in the form \eqref{eqn:fsplit}.  This is carried out for $1/E_6^4$ in Theorem 6.2 of \cite{BKFourier}.  
\begin{remark}
There are also results for the Fourier coefficients of meromorphic modular forms of higher level if the pole order is small \cite{Zap}. 
\end{remark}

\section{Theta lifts and inner products of meromorphic modular forms}\label{sec:cycle}
In this section, we investigate lifts between forms of weights $k+1/2$ and $2k$ as well as lifts from forms of weight $3/2-k$ to $2-2k$, where $k\in\N_{\geq 2}$. 
 
 \subsection{Regularized inner products and theta lifts}\label{sec:regularize}

Although \eqref{eqn:innerclassical}
 generally diverges if $g$ or $h$ have poles or grow towards the cusps, for a number of applications it is useful to take such inner products.  Petersson \cite{Pe2} appears to be the first to consider this problem, defining a regularized inner product $\left<g,h\right>$ via the Cauchy principal value of the naive definition,
 essentially cutting out small balls of hyperbolic radius $\varepsilon >0$ around each pole and then taking $\varepsilon\to 0$.
If  the poles are at the cusps, his construction was rediscovered and extended by Borcherds \cite{Bo1} and Harvey and Moore \cite{HM}, and then used by Bruinier \cite{Bruinier} and others to define theta lifts of weakly holomorphic modular forms.   We are particularly interested in theta lifts coming from the Shintani theta kernel 
\begin{equation*}
\Theta_k(z,\tau):=y^{-2k}v^{\frac{1}{2}}\sum_{D\in \Z}\sum_{Q\in\mathcal{Q}_D}Q(z,1)^k e^{-4\pi Q_{z}^2 v}e^{2\pi i D\tau}.
\end{equation*}

The function $z\mapsto \Theta_k(-\overline{z},\tau)$ has weight $2k$ and $\tau\mapsto\Theta_k(z,\tau)$ has weight $k+1/2$.  Hence taking the inner product in one variable yields a lift between forms satisfying modularity in integral and half-integral weights.  In particular, for a weight $k+1/2$ function $H$, we define the \begin{it}theta lift of $H$\end{it},
\[
\Phi_k(H)(z):= \left<H,\Theta_k\left(z,\cdot\right)\right>.
\]
The functions $f_Q$ were realized as theta lifts of weakly holomorphic modular forms by 
von Pippich and the authors \cite{BKCycle}. In order to obtain such a lift, one may apply $\Phi_k$ to 
\begin{equation*}
g_{k,-D}:=\frac{(4\pi)^{k}}{(k-1)!}D^{\frac{k}{2}}P_{k+\frac{1}{2},-D}.
\end{equation*}

The lift is evaluated by a standard unfolding argument, yielding the following.
\begin{theorem}[Theorem 1.1 of \cite{BKCycle}]\label{thm:liftfkd}
For every discriminant $-D<0$, we have 
\begin{equation*}
\Phi_k\left(g_{k,-D}\right)=f_{k,-D}.
\end{equation*}
\end{theorem}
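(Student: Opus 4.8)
The plan is to compute the theta lift $\Phi_k(g_{k,-D})(z) = \langle g_{k,-D}, \Theta_k(z,\cdot)\rangle$ directly via the standard unfolding technique, exploiting the fact that $g_{k,-D}$ is (up to the stated normalization constant) a half-integral weight Poincar\'e series $P_{k+\frac12,-D}$. First I would write out the regularized Petersson inner product as an integral over $\Gamma_0(4)\backslash\H$ (in the $\tau=u+iv$ variable) of $g_{k,-D}(\tau)\overline{\Theta_k(z,\tau)}v^{k+\frac12}\frac{du\,dv}{v^2}$, being careful that the Poincar\'e series is built by summing a seed function over $\Gamma_\infty\backslash\Gamma_0(4)$. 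The core idea is that the $\Gamma_0(4)$-sum defining the Poincar\'e series, when paired against the $\Gamma_0(4)$-invariant theta kernel, unfolds the fundamental domain to the strip $\Gamma_\infty\backslash\H$, collapsing the group sum. Concretely, after inserting the defining sum for $P_{k+\frac12,-D}$ and unfolding, the integral reduces to $\int_0^\infty\int_0^1 (\text{seed})(\tau)\,\overline{\Theta_k(z,\tau)}\,v^{k+\frac12}\frac{du\,dv}{v^2}$ over a single copy of the strip.

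The next step is to carry out the $u$-integration to extract the relevant Fourier coefficient of the theta kernel. Since the seed of $P_{k+\frac12,-D}$ carries an exponential $e^{2\pi i(-D)\tau}$-type factor (matching discriminant $-D$) and $\Theta_k$ has Fourier expansion in $\tau$ indexed by discriminants $D'\in\Z$ with coefficient involving $\sum_{Q\in\mathcal{Q}_{D'}}Q(z,1)^k e^{-4\pi Q_z^2 v}$, orthogonality of the additive characters over $u\in[0,1]$ picks out exactly the $D'=D$ term. What survives is, up to constants, $\sum_{Q\in\mathcal{Q}_{-D}}Q(z,1)^k$ times a $v$-integral of the Gaussian factor $e^{-4\pi Q_z^2 v}$ against the remaining powers of $v$ and the Whittaker/exponential pieces from the seed. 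I would then evaluate this $v$-integral as a Gamma integral; the normalization constant $\frac{(4\pi)^k}{(k-1)!}D^{k/2}$ in the definition of $g_{k,-D}$ is precisely engineered so that this Gamma factor and the powers of $4\pi$ cancel cleanly, leaving $D^{k/2}\sum_{Q\in\mathcal{Q}_{-D}}Q(z,1)^{-k}$, which is exactly $f_{k,-D}(z)$ as defined in the introduction.

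I expect the main obstacle to be handling the regularization carefully rather than the formal unfolding. Because $g_{k,-D}$ is built from $P_{k+\frac12,-D}$ with $-D<0$, its seed has a non-holomorphic (exponentially growing) principal part at the cusp $i\infty$, so the naive inner product diverges and one must use Petersson's regularized version (cutting out $\varepsilon$-balls, or truncating at height $T$ and taking a limit). The delicate point is justifying that unfolding commutes with the regularization and that the contributions from the non-holomorphic part of the seed and from the other cusps of $\Gamma_0(4)$ either vanish or are absorbed, so that only the principal unfolded term contributes in the limit. A secondary technical care is ensuring absolute convergence to interchange the group sum, the Fourier sum over $D'$, and the integrals; this is where the growth conditions on $\Theta_k$ and the precise decay of the seed matter. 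Once the regularization is controlled, the remaining computation is the routine Gamma-integral bookkeeping described above, and the constant in $g_{k,-D}$ makes the final identity $\Phi_k(g_{k,-D})=f_{k,-D}$ fall out exactly.
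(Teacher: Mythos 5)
Your proposal is correct and takes essentially the same route as the paper: the survey's entire justification is that ``the lift is evaluated by a standard unfolding argument,'' which is precisely the Rankin--Selberg unfolding of the Poincar\'e series $P_{k+\frac12,-D}$ against the theta kernel that you carry out, with the regularization issue you flag being exactly the technical point the paper acknowledges by requiring a regularized lift. The details you supply (unfolding to the strip, extracting the discriminant $-D$ Fourier coefficient of $\Theta_k$, evaluating the resulting Gamma integral, and checking that the normalization constant in $g_{k,-D}$ cancels everything to leave $D^{\frac{k}{2}}\sum_{Q\in\mathcal{Q}_{-D}}Q(z,1)^{-k}=f_{k,-D}$) are the standard fleshing-out of that argument.
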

A closely-related theta lift in negative weight is constructed via the theta kernel
\begin{equation*}
\Theta_{1-k}^*(z,\tau):=v^k \sum_{D\in \Z}\sum_{Q\in \mathcal{Q}_D} Q_{z}Q(z,1)^{k-1} e^{-\frac{4\pi |Q(z,1)|^2 v}{y^2}}  e^{-2\pi i D\tau}.
\end{equation*}
The function $z\mapsto \Theta_{1-k}^*(z,\tau)$ has weight $2-2k$ 
and
\rm
 $\tau\mapsto \Theta_{1-k}^*(z,\tau)$ has weight $3/2-k$ (see Theorem 4.1 of \cite{Bo1} for a general treatment and Proposition 3.2 (2) of \cite{BKM} for this case).
\rm
For a function $H$ transforming of weight $3/2-k$, we thus define the theta lift
$$
\Phi_{1-k}^* (H)(z):=\left<H,\Theta_{1-k}^*\left(-\overline{z},\cdot\right)\right>.
$$
We obtain $\mathcal{G}_Q$, defined in \eqref{eqn:Gdef}, by applying $\Phi_{1-k}^*$ to the harmonic Maass form  
$$
\mathcal{P}_{1-k,-D}:=\frac{12\sqrt{\pi}\Gamma\left(k+\frac{1}{2}\right)}{D^{k-\frac{1}{2}}(k-1)!(2k-1)} F_{2-2k,-D}.
$$

\begin{theorem}[Theorem 1.3 of \cite{BKCycle}]\label{thm:thetalift}
For $-D<0$ a discriminant, we have 
$$
\Phi_{1-k}^*\left(\mathcal{P}_{1-k,-D}\right) = \sum_{Q\in\mathcal{Q}_{-D}/\SL_2(\Z)} \mathcal{G}_Q.
$$
\end{theorem}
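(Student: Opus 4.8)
The plan is to evaluate $\Phi_{1-k}^*(\mathcal{P}_{1-k,-D})$ by the unfolding method, in direct analogy with the proof of Theorem~\ref{thm:liftfkd}, exploiting that, up to its normalizing constant and the projection into Kohnen's plus space, $\mathcal{P}_{1-k,-D}$ is a weight $3/2-k$ Maass--Poincar\'e series of index $-D$ of the type $F_{\kappa,m,N}$ from Section~\ref{sec:Poincare}. Writing $\tau=u+iv$ and recalling from Section~\ref{sec:regularize} that the lift is the regularized integral $\langle\mathcal{P}_{1-k,-D},\Theta_{1-k}^*(-\overline z,\cdot)\rangle$ over $\Gamma_0(4)\backslash\H$, the first step is to replace $\mathcal{P}_{1-k,-D}$ by its definition as a sum over $\Gamma_\infty\backslash\Gamma_0(4)$ and to unfold this sum against the fundamental domain. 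Since $\tau\mapsto\Theta_{1-k}^*(-\overline z,\tau)$ has weight $3/2-k$, the slash-invariance makes the unfolding collapse the integral over $\Gamma_0(4)\backslash\H$ into an integral of the Poincar\'e seed $\varphi_{3/2-k,-D}$ against $\Theta_{1-k}^*(-\overline z,\tau)$ over the strip $\Gamma_\infty\backslash\H$, the plus-space projection being harmless because $\Theta_{1-k}^*$ already lies in Kohnen's plus space in the variable $\tau$.

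Before unfolding I would record the regularization carefully: the Gaussian factor $e^{-4\pi|Q(z,1)|^2 v/y^2}$ in $\Theta_{1-k}^*$ guarantees rapid decay in $v$, so the only delicate contribution is the growing principal part of $\varphi_{3/2-k,-D}$ at $i\infty$, and one must check that the Cauchy principal value isolates a finite answer there. Once the integral is over the strip, the integration in $u\in[0,1]$ is an orthogonality relation among the exponentials $e^{2\pi i m u}$: it annihilates every term of the double sum $\sum_{D'\in\Z}\sum_{Q\in\mathcal{Q}_{D'}}$ defining the theta kernel except the single discriminant slice matched to the index $-D$ (the precise sign being fixed by the factor $e^{-2\pi i D'\tau}$ in $\Theta_{1-k}^*$ together with the conjugation in the inner product), leaving exactly the sum over the forms $Q$ of discriminant $-D$ that appear in $\sum_{Q}\mathcal{G}_Q$.

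This reduces the statement, for each such $Q$, to a single $v$-integral: the product of the $M$-Whittaker function coming from $\varphi_{3/2-k,-D}$ against $v^k\,Q_z\,Q(z,1)^{k-1}e^{-4\pi|Q(z,1)|^2 v/y^2}$, all evaluated at $z\mapsto-\overline z$. The main obstacle is to evaluate this integral in closed form and to recognize it as $D^{\frac{1-k}{2}}Q(z,1)^{k-1}\int_0^{\arctanh(\sqrt D/Q_z)}\sinh^{2k-2}(\theta)\,d\theta$, the summand of $\mathcal{G}_Q$ in \eqref{eqn:Gdef}. I expect that inserting an integral representation of the $M$-Whittaker function and then applying a hyperbolic change of variables (of the form $t=\tanh(\theta)$) will turn the Gaussian-times-power integrand into $\sinh^{2k-2}(\theta)$; the upper limit $\arctanh(\sqrt D/Q_z)$ should emerge from the standard identity relating $|Q(z,1)|^2/y^2$, $Q_z$, and $D$, which pins down the endpoint where the relevant quantity degenerates. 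The final step is purely bookkeeping: collecting the normalizing constant $\tfrac{12\sqrt\pi\,\Gamma(k+\frac12)}{D^{k-1/2}(k-1)!(2k-1)}$ from $\mathcal{P}_{1-k,-D}$ together with the constants produced by the $v$-integral and the factor $D^{\frac{1-k}{2}}$ in \eqref{eqn:Gdef}, checking that they cancel, and reorganizing $\sum_{Q\in\mathcal{Q}_{-D}}$ as $\sum_{Q\in\mathcal{Q}_{-D}/\SL_2(\Z)}\sum_{\mathcal{Q}\in[Q]}$ to recover $\sum_{Q\in\mathcal{Q}_{-D}/\SL_2(\Z)}\mathcal{G}_Q$.
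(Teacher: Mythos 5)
Your overall strategy --- unfolding the Maass--Poincar\'e series against $\Theta_{1-k}^*$, using orthogonality of the exponentials in $u$ to isolate a single discriminant slice, and evaluating the remaining $v$-integral (a Laplace transform of the $M$-Whittaker function) as the integral $\int_0^{\arctanh(\sqrt{D}/\mathcal{Q}_z)}\sinh^{2k-2}(\theta)\,d\theta$ appearing in \eqref{eqn:Gdef} --- is exactly the standard unfolding argument that the paper invokes for the companion lift in Theorem \ref{thm:liftfkd}, and it is the right skeleton here as well. However, there is a genuine error, and it is not absorbed by your final ``bookkeeping'' step: you identify the input $\mathcal{P}_{1-k,-D}$ as the weight $3/2-k$ Poincar\'e series of \emph{negative} index $-D$, i.e., the one whose principal part is the holomorphic term $q^{-D}$. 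Carry out your own orthogonality step with the paper's conventions: the slice $D'$ of $\Theta_{1-k}^*$ carries $e^{-2\pi i D'\tau}$, so after conjugation in the inner product it contributes $e^{2\pi i D'u}e^{2\pi D'v}$, and a seed with $u$-frequency $m$ survives the $u$-integration precisely when $D'=-m$. Your choice $m=-D$ therefore selects the slice $D'=+D$, i.e., the \emph{indefinite} forms of discriminant $+D$, not $\mathcal{Q}_{-D}$. The outcome is not $\sum_{Q}\mathcal{G}_Q$ at all: using the identity $|Q(z,1)|^2/y^2=Q_z^2+D'$, the unfolded integrand then decays like $e^{-4\pi Q_z^2 v}$, so your reading produces a function with singularities along the geodesics $Q_z=0$ --- a locally harmonic Maass form in the sense of \cite{BKW} --- rather than a polar harmonic Maass form with singularities at the CM points $\tau_Q$.

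The correct input is the Poincar\'e series of \emph{positive} index $+D$, whose principal part at $i\infty$ is non-holomorphic; this is what the introduction states explicitly (``inputs half-integral weight harmonic Maass forms with non-holomorphic principal parts at $i\infty$''), and it is forced by consistency with the differential operators: $\xi_{3/2-k}$ maps the positive-index series to a multiple of $P_{k+\frac{1}{2},-D}$, i.e., essentially to $g_{k,-D}$, matching $\xi_{2-2k}\bigl(\sum_{Q}\mathcal{G}_Q\bigr)=f_{k,-D}=\Phi_k(g_{k,-D})$ from Theorems \ref{thm:Gpolar} and \ref{thm:liftfkd}. With $m=+D$ the orthogonality picks out $D'=-D$, and then $|Q(z,1)|^2/y^2=Q_z^2-D$ shows the unfolded integral converges away from the points $\tau_Q$ (where $Q_z^2=D$), reproducing exactly the singularity structure of $\mathcal{G}_Q$; the $v$-integral then yields the claimed $\sinh^{2k-2}$-integral. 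In your defense, the paper's displayed definition of $\mathcal{P}_{1-k,-D}$ as a multiple of $F_{2-2k,-D}$ contains a typo (a weight $2-2k$ form cannot be fed into $\Phi_{1-k}^*$, which requires weight $3/2-k$), so some guessing was unavoidable --- but the sign of the index is the crux of the theorem, your hedge about ``the precise sign being fixed by the conjugation'' resolves it the wrong way, and the introduction together with the convergence analysis determines it unambiguously. Once the input is corrected, the remainder of your outline (self-adjointness of the plus-space projection, the Whittaker--Laplace transform, and regrouping $\sum_{Q\in\mathcal{Q}_{-D}}$ into $\SL_2(\Z)$-classes) goes through.
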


The functions $\mathcal{G}_Q$ are furthermore related to $f_{Q}$ via $\xi_{2-2k}$, as given in the following theorem.

\begin{theorem}[Theorem 1.2 of \cite{BKCycle}]\label{thm:Gpolar}
The functions $\mathcal{G}_Q$ are weight $2-2k$ polar harmonic Maass forms whose only  singularities in $\H\setminus\SL_2(\Z)$ occur at $\tau_Q$.  Furthermore, we have 
\begin{align}
\label{eqn:xiG}\xi_{2-2k}\left(\mathcal{G}_{Q}\right) &= f_{Q},\\
\mathcal{D}^{2k-1}\left(\mathcal{G}_{Q}\right)&= -\frac{(2k-2)!}{(4\pi)^{2k-1}} f_{Q}. \nonumber
\end{align}
\end{theorem}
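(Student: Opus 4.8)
The plan is to reduce everything to a term-by-term computation on the individual summands of \eqref{eqn:Gdef} and to let the identity \eqref{eqn:xiG} carry most of the weight. Write $G_{\mathcal{Q}}(z):=\mathcal{Q}(z,1)^{k-1}F(\mathcal{Q}_z)$ for the summand, where $F(u):=\int_0^{\arctanh(\sqrt{D}/u)}\sinh^{2k-2}(\theta)\,d\theta$, so that $\mathcal{G}_Q=D^{\frac{1-k}{2}}\sum_{\mathcal{Q}\in[Q]}G_{\mathcal{Q}}$. Three elementary identities drive the calculation: differentiating the definition of $\mathcal{Q}_z$ gives $\partial_{\bar z}\mathcal{Q}_z=-i\mathcal{Q}(z,1)/(2y^2)$ and $\partial_z\mathcal{Q}_z=i\overline{\mathcal{Q}(z,1)}/(2y^2)$; a direct expansion yields $|\mathcal{Q}(z,1)|^2=y^2(\mathcal{Q}_z^2-D)$; and from $\sinh(\arctanh(t))=t/\sqrt{1-t^2}$ one gets $F'(u)=-D^{k-\frac12}(u^2-D)^{-k}$. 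With these in hand I can evaluate $\xi_{2-2k}$ on a single summand.

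Carrying this out, $\partial_{\bar z}G_{\mathcal{Q}}=\mathcal{Q}(z,1)^{k-1}F'(\mathcal{Q}_z)\partial_{\bar z}\mathcal{Q}_z$ since the holomorphic factor is annihilated, and substituting the identities collapses $(\mathcal{Q}_z^2-D)^{-k}$ into $y^{2k}/(\mathcal{Q}(z,1)^k\overline{\mathcal{Q}(z,1)}^k)$; applying $\xi_{2-2k}=2iy^{2-2k}\overline{\partial_{\bar z}}$ then produces exactly $D^{k-\frac12}\mathcal{Q}(z,1)^{-k}$. Summing over the class and multiplying by the real constant $D^{\frac{1-k}{2}}$ gives \eqref{eqn:xiG}. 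This computation simultaneously settles much of the rest: since $\xi_{2-2k}(\mathcal{G}_Q)=f_Q$ is meromorphic, the splitting $\Delta_{2-2k}=-\xi_{2k}\circ\xi_{2-2k}$ from \eqref{eqn:Deltasplit} forces $\Delta_{2-2k}(\mathcal{G}_Q)=-\xi_{2k}(f_Q)=0$, so $\mathcal{G}_Q$ is harmonic wherever it is smooth. Weight $2-2k$ modularity follows because $\mathcal{Q}(z,1)^{k-1}$ carries the cocycle $(cz+d)^{2-2k}$ while $\mathcal{Q}_z$, and hence $F(\mathcal{Q}_z)$, is invariant, and the slash action merely permutes the forms within the class $[Q]$.

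For the singularity and growth conditions I would argue on the summands. Near the root $\tau_{\mathcal{Q}}$ one has $\mathcal{Q}_z\to\sqrt{D}$, so $F(\mathcal{Q}_z)$ blows up like $(\mathcal{Q}_z-\sqrt D)^{-(k-1)}\sim|z-\tau_{\mathcal{Q}}|^{-2(k-1)}$, while the prefactor $\mathcal{Q}(z,1)^{k-1}$ vanishes to order $k-1$; the product is bounded after multiplication by $(z-\tau_{\mathcal{Q}})^{k-1}$, and in a fixed fundamental domain only the representative with $\tau_{\mathcal{Q}}=\tau_Q$ contributes a singularity. Towards $i\infty$ the integral is $O((\sqrt D/\mathcal{Q}_z)^{2k-1})$, which forces each summand, hence $\mathcal{G}_Q$, to decay, placing it in the ``cusp'' subspace. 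Together with absolute and locally uniform convergence of the series for $k\geq 2$ (which also justifies differentiating term by term), this shows that $\mathcal{G}_Q$ is a polar harmonic Maass form of weight $2-2k$ with singularities only at $\tau_Q$.

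It remains to prove the second operator identity. Harmonicity together with Bol's identity shows $\mathcal{D}^{2k-1}(\mathcal{G}_Q)=(-4\pi)^{-(2k-1)}R_{2-2k}^{2k-1}(\mathcal{G}_Q)$ is a weight $2k$ meromorphic form, so it suffices to prove $R_{2-2k}^{2k-1}(\mathcal{G}_Q)=(2k-2)!\,f_Q$; since $2k-1$ is odd this yields the stated constant $-(2k-2)!/(4\pi)^{2k-1}$. I would compute $R_{2-2k}^{2k-1}$ on a single summand, using $\partial_z\mathcal{Q}_z=i\overline{\mathcal{Q}(z,1)}/(2y^2)$ and the same collapse $|\mathcal{Q}(z,1)|^2=y^2(\mathcal{Q}_z^2-D)$ at each stage; by Bol's identity all non-meromorphic contributions must cancel, leaving a constant multiple of $\mathcal{Q}(z,1)^{-k}$, with the combinatorics of the iterated raising producing the factor $(2k-2)!$. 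This last step is the main obstacle: unlike $\xi_{2-2k}$, which is first order and eliminates the holomorphic factor outright, the operator $R_{2-2k}^{2k-1}$ mixes the prefactor $\mathcal{Q}(z,1)^{k-1}$ with the derivatives of $F(\mathcal{Q}_z)$, so tracking the telescoping cancellations and pinning down the factorial constant requires care. As an independent check of the constant I would instead compare the leading coefficients of the elliptic expansions of $\mathcal{D}^{2k-1}(\mathcal{G}_Q)$ and $f_Q$ at $\tau_Q$, using Proposition \ref{prop:ellexp} and the explicit action of the raising operator on that expansion, since both forms have a pole of order $k$ there.
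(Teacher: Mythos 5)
Your overall strategy --- writing $\mathcal{G}_Q$ as a Poincar\'e series, applying the differential operators term by term, and invoking absolute and locally uniform convergence to justify this --- is exactly the route the paper takes, and your $\xi_{2-2k}$ computation is correct and complete: the identities $\partial_{\overline{z}}\mathcal{Q}_z=-i\mathcal{Q}(z,1)/(2y^2)$, $|\mathcal{Q}(z,1)|^2=y^2\left(\mathcal{Q}_z^2-D\right)$, and $F'(u)=-D^{k-\frac{1}{2}}\left(u^2-D\right)^{-k}$ do collapse each summand to $D^{k-\frac{1}{2}}\mathcal{Q}(z,1)^{-k}$, giving \eqref{eqn:xiG}; harmonicity then follows from \eqref{eqn:Deltasplit} exactly as you say, and your singularity and decay analysis is sound.

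The genuine gap is the second identity, which is half of the theorem and which you do not actually prove. Your primary plan --- computing $R_{2-2k}^{2k-1}$ termwise on each summand --- is indeed what the paper's argument does (its sketch states that \emph{both} differential operators send each summand to a constant multiple of $\mathcal{Q}(z,1)^{-k}$), but you stop at ``requires care'' without exhibiting the computation or even an inductive scheme, so neither the meromorphy of the termwise output nor the factor $(2k-2)!$ is established in your write-up. Moreover, the fallback you propose cannot close this gap: comparing the \emph{leading} elliptic coefficients of $\mathcal{D}^{2k-1}\left(\mathcal{G}_Q\right)$ and $f_Q$ at $\tau_Q$ only shows that the difference of these two weight $2k$ meromorphic modular forms has a pole of order at most $k-1$ there; to conclude equality you would need all principal part coefficients to agree \emph{and} an argument ruling out a cusp form discrepancy, which is nontrivial since $S_{2k}\neq\{0\}$ once $2k\geq 12$. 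You concede this is only ``a check of the constant,'' but that leaves the identity unproven. A secondary weakness: you assert absolute and locally uniform convergence for $k\geq 2$ parenthetically, whereas the paper flags this (following Bengoechea's argument for $f_Q$) as one of the main steps of the proof; some estimate on the sum over $[Q]$ must be supplied before any termwise differentiation is legitimate.
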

\begin{remark}
It is rather unusual for the images of a harmonic function to agree (up to a constant) under both operators.  For example, although the Eisenstein series has this property, 
it is impossible if the images are both cusp forms.
\rm
\end{remark}
\begin{proof}[Sketch of proof]
One obtains the modularity of $\mathcal{G}_{Q}$ by writing it as a Poincar\'e series.  
The functions $f_Q$ also have presentations 
as Poincar\'e series, and both of the differential operators map the terms of $\mathcal{G}_Q$ to a constant multiple of the summands in $f_Q$.  Similarly to Bengoechea's \cite{Bengoechea} case for $f_Q$, one of the main steps is to show absolute and locally uniform convergence of $\mathcal{G}_Q$ 
so that one may apply differential operators termwise.

\end{proof}

\subsection{Inner products between meromorphic modular forms.}
The functions $\mathcal{G}_Q$ reappear when taking inner products of $f_{Q}$ against meromorphic cusp forms with simple poles. Since Petersson's regularization still diverges sometimes, one requires a further regularization.  Roughly speaking, the integrand in \eqref{eqn:innerclassical} is multiplied by an SL$_2(\mathbb{Z})$-invariant function $H_s(\tau)$ which removes the poles of the integrand. We then take the constant term of the Laurent expansion around $s=0$ to be our regularization.  To be more precise, let $[z_1],\dots, [z_r]\in \operatorname{PSL}_2(\Z)\backslash\H$ be the distinct $\SL_2(\Z)$-equivalence classes of all of the poles of $g$ and $h$ and choose a fundamental domain $\mathcal{F}^*$ such that all $z_{\ell}$ lie in the interior of $\Gamma_{z_{\ell}}\mathcal{F}^*$. Then let
\begin{equation*}
\left<g,h\right>:=
 \operatorname{CT}_{s=0}\left(\int_{\SL_2(\Z)\backslash \H} g(z) H_s(z) \overline{h(z)} y^{2k}\frac{dx dy}{y^2}\right),
\end{equation*}
where 
$$
H_s (z) = H_{s_1,\dots, s_r, z_1, \dots, z_r} (z) := \prod_{\ell=1}^r h_{s_{\ell},z_{\ell}} (z).
$$
Here
$$
h_{s_{\ell},z_{\ell}}(z):=r_{z_{\ell}}^{2s_{\ell}}(Mz),
$$
with $M \in \SL_2(\Z)$ chosen such that $Mz\in\mathcal{F}^*$. Moreover $\operatorname{CT}_{s=0}$ 
denotes the constant term in the Laurent expansion around $s_1=s_2=\cdots =s_{r}=0$ of the meromorphic continuation (if existent).

This regularization was used to compute the inner product between $f_{Q}$ and $f\in\SSnew_{2k}$.  The results in the case that the poles of $f$ are all simple are collected below.

\begin{theorem}[Theorems 1.7 (2) and 1.8 (2) of \cite{BKCycle}]\label{thm:innersimple}
Suppose that $f\in\SSnew_{2k}$ has its poles in $\SL_2(\Z)\backslash\H$ at $[z_1],\dots, [z_r]$ with $[z_{\ell}]\neq [z_j]$ for $\ell\neq j$ and that all poles are simple.
\noindent

\noindent
\begin{enumerate}[leftmargin=*]
\item[\rm(1)]
If $[z_{\ell}]\neq [\tau_Q]$ for $1\leq \ell\leq r$, then  
$$
\left<f,f_{Q}\right> =2\pi i\sum_{\ell=1}^{r}\frac{1}{\omega_{z_{\ell}}}  \mathcal{G}_{Q}(z_{\ell}) \text{\emph{Res}}_{z=z_{\ell}} f(z).
$$
\item[\rm(2)]
If $z_r=\tau_Q$, then 
\begin{multline*}
\left<f,f_{Q}\right> =2\pi i\sum_{\ell=1}^{r-1}\frac{1}{\omega_{z_{\ell}}}  \mathcal{G}_{Q}(z_{\ell}) \text{\emph{Res}}_{z=z_{\ell}} f(z)\\+\frac{2\pi i}{\omega_{\tau_{Q}}}D^{\frac{1-k}{2}}\Res_{z=\tau_{Q}} f(z) \sum_{\mathcal{Q}\in [Q]\setminus \{Q\}} \mathcal{Q}(\tau_Q,1)^{k-1}\int_0^{\arctanh\left(\frac{\sqrt{D}}{\mathcal{Q}_{\tau_Q}}\right)} \sinh^{2k-2}(\theta) d\theta.
\end{multline*}
\end{enumerate}
\end{theorem}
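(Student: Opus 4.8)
The plan is to rewrite the regularized inner product as a pairing of Bruinier--Funke type and to evaluate it by Stokes' Theorem, exactly as in the sketch of Proposition~\ref{expansionprod}, but now keeping track of the poles of $f$. Since $\xi_{2-2k}(\mathcal{G}_Q)=f_Q$ by \eqref{eqn:xiG} and $\xi_{2-2k}=2iy^{2-2k}\,\overline{\partial/\partial\overline{z}}$, we have $\overline{f_Q(z)}\,y^{2k}\tfrac{dx\,dy}{y^2}=-2i\,\tfrac{\partial}{\partial\overline{z}}\mathcal{G}_Q(z)\,dx\,dy$. Ignoring the regularizing factor $H_s$ for the moment and using that $f$ is holomorphic away from its poles, the integrand of $\langle f,f_Q\rangle$ equals $-d\!\left(f(z)\mathcal{G}_Q(z)\,dz\right)$ off the poles, so on a fundamental domain with small hyperbolic balls cut out around each pole the integral collapses to a sum of boundary integrals.

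The cusp contributions vanish because $f$ is a meromorphic cusp form and $\mathcal{G}_Q$ maps under $\xi_{2-2k}$ to the cusp form $f_Q$, so the paired sides of the fundamental domain and the cusp neighborhoods contribute nothing; the only surviving boundary terms come from the circles about the poles $z_1,\dots,z_r$. Around a pole $z_\ell\neq\tau_Q$, the function $\mathcal{G}_Q$ is real-analytic by Theorem~\ref{thm:Gpolar}, while $f$ has a simple pole, so shrinking the ball and applying the residue theorem produces $2\pi i\,\omega_{z_\ell}^{-1}\mathcal{G}_Q(z_\ell)\operatorname{Res}_{z=z_\ell}f(z)$, where $\omega_{z_\ell}^{-1}$ records the portion of a full neighborhood of $z_\ell$ lying in the fundamental domain. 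Summing over $\ell$ gives part~(1), and these same terms account for the first sum in part~(2).

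The remaining and most delicate contribution is the circle about $\tau_Q$ in part~(2), where $f$ and $f_Q$ --- equivalently $\mathcal{G}_Q$ --- are simultaneously singular, and it is precisely here that the regularization $H_s$ is needed. I would split $\mathcal{G}_Q$ into the single summand of \eqref{eqn:Gdef} indexed by $\mathcal{Q}=Q$, which carries the singularity at $\tau_Q$, and the remaining finite part
$$
\mathcal{G}_Q^{\mathrm{fin}}(z):=D^{\frac{1-k}{2}}\sum_{\mathcal{Q}\in[Q]\setminus\{Q\}}\mathcal{Q}(z,1)^{k-1}\int_0^{\arctanh\left(\frac{\sqrt{D}}{\mathcal{Q}_z}\right)}\sinh^{2k-2}(\theta)\,d\theta,
$$
which is real-analytic at $\tau_Q$. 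The finite part contributes $2\pi i\,\omega_{\tau_Q}^{-1}\mathcal{G}_Q^{\mathrm{fin}}(\tau_Q)\operatorname{Res}_{z=\tau_Q}f(z)$, which is exactly the second term of part~(2). The main obstacle is to show that the singular $Q$-summand contributes nothing after regularization: near $\tau_Q$ the product $f\,\mathcal{G}_Q$ inherits the combined blow-up of a simple pole of $f$ and the singularity of the $Q$-term, so the boundary integral only makes sense after inserting the factor $h_{s_r,\tau_Q}$, and one must verify that the meromorphic continuation in $s_r$ of this contribution has vanishing constant term at $s_r=0$. I expect this constant-term computation --- controlling the interaction of $H_s$ with the coincident singularities of $f$ and $\mathcal{G}_Q$ at $\tau_Q$ --- to be the crux of the argument, while the remaining steps amount to the standard Stokes and residue bookkeeping.
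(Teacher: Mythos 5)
Your proposal follows essentially the same route as the paper's proof: Stokes' Theorem applied as in Proposition \ref{expansionprod} with $\mathcal{G}_Q$ as the $\xi_{2-2k}$-preimage of $f_Q$, cusp contributions vanishing, pole contributions identified as residues of $f$ times values of $\mathcal{G}_Q$, and, for part (2), isolation of the singular $\mathcal{Q}=Q$ summand, which the paper phrases as the identity that the $0$th elliptic coefficient of $\mathcal{G}^+_{Q,\tau_Q}$ equals $\mathcal{G}_Q$ minus the $\mathcal{Q}=Q$ term evaluated at $\tau_Q$. The crux you single out --- that the regularized contribution of the singular summand vanishes --- is precisely the content of that identity, so your plan matches the paper's argument.
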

\begin{remarks}
\noindent

\noindent
\begin{enumerate}[leftmargin=*]
\item
A more general version of Theorem \ref{thm:innersimple}, allowing higher order poles of $f$, is given in Theorem 1.8 of \cite{BKCycle}.   The residue of $f$ is replaced with elliptic coefficients in the principal parts of $f$ and a more general family of polar Maass forms appears in place of $\mathcal{G}_Q$.
\item
Zemel has obtained an interesting vector-valued 
generalization of Theorem \ref{thm:innersimple} in Theorem 10.3 of \cite{Zemel}.   

\end{enumerate}
\end{remarks}
\begin{proof}[Sketch of proof of Theorem \ref{thm:innersimple}]
(1)  First recall that, by \eqref{eqn:xiG}, $\mathcal{G}_Q$ is a preimage of $f_Q$ under $\xi_{2-2k}$.  We use Stokes Theorem and follow the proof of Proposition \ref{expansionprod}, except that we replace the cusp form $g$ by a meromorphic cusp form.  From this we obtain a formula for $\left<f,f_{Q}\right>$ resembling \eqref{eqn:gFpair}, except that we also have a contribution from the principal part of the elliptic expansions of $f$.  In particular, if all poles $[z_1],\dots [z_r]\in \SL_2(\Z)\backslash\H$ of $f$ are simple, then one gets the product of the $0$th elliptic coefficient of $\mathcal{G}_{Q,z_{\ell}}^+$ times the $(-1)$th coefficient in the elliptic expansion of $f$ around $z=z_{\ell}$.  One concludes the claim by realizing that the $0$th coefficient of $\mathcal{G}_{Q,z_{\ell}}^+$ is essentially $\mathcal{G}_Q(z_{\ell})$ and  the $(-1)$th coefficient in the elliptic expansion of $f$ roughly equals its residue.  

\noindent
(2)  
The argument is essentially the same, but there is an additional difficulty which arises since  $z_r=\tau_Q$, so $\mathcal{G}_Q$ has a 
singularity at $z_r$.  In this case, one determines that the $0$th elliptic coefficient of $\mathcal{G}^+_{Q,\tau_Q}$ is the evaluation at $z=\tau_Q$ of $\mathcal{G}_Q$ minus the $\mathcal{Q}=Q$ term.
\end{proof}

\subsection{Relations with higher Green's functions}

The functions $\mathcal{G}_Q$ are also closely related to evaluations of higher Green's functions at CM-points.
\begin{proposition}\label{prop:GQGreens}
We have 
$$
\mathcal{G}_Q(z) = \frac{1}{2^k\omega_{\tau_Q}(k-1)!} y^{2k-2}\overline{R_{0}^{k-1}\left(G_k^{\H/\SL_2(\Z)}\!\left(z,\tau_Q\right)\right)}.
$$
\end{proposition}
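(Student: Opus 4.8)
The plan is to show that the right-hand side, which I abbreviate by $\mathcal{R}(z):=c\,y^{2k-2}\overline{R_0^{k-1}(G)}$ with $G:=G_k^{\H/\SL_2(\Z)}(z,\tau_Q)$ and $c:=\frac{1}{2^k\omega_{\tau_Q}(k-1)!}$, is a weight $2-2k$ polar harmonic Maass form having the same image under $\xi_{2-2k}$ and the same principal part at $\tau_Q$ as $\mathcal{G}_Q$; equality will then follow because their difference is a holomorphic modular form of negative weight. Since $G$ has weight $0$ in $z$, the function $h:=R_0^{k-1}(G)$ has weight $2k-2$, and $h\mapsto y^{2k-2}\overline{h}$ sends weight $2k-2$ to weight $2-2k$, so $\mathcal{R}$ has weight $2-2k$. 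A short computation gives the operator identity $\xi_{2-2k}(y^{2k-2}\overline{h})=R_{2k-2,z}(h)$ for every $h$ of weight $2k-2$, whence
\[
\xi_{2-2k}(\mathcal{R})=c\,R_{2k-2,z}\big(R_0^{k-1}(G)\big)=c\,R_0^{k}(G).
\]

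First I would verify that $\mathcal{R}$ is a polar harmonic Maass form. By property (3) of the Green's function, $G$ is an eigenfunction of $\Delta_0$ with eigenvalue $k(1-k)$. Using the ladder relations $\Delta_\kappa=-R_{\kappa-2}L_\kappa=-L_{\kappa+2}R_\kappa-\kappa$, where $L_\kappa:=-2iy^2\,\partial_{\overline{z}}$ lowers the weight by $2$, one finds that $R_0^{k-1}(G)$ has $\Delta_{2k-2}$-eigenvalue $-(2k-2)$, and therefore $L_{2k}(R_0^{k}(G))=(-\Delta_{2k-2}-(2k-2))(R_0^{k-1}(G))=0$. Thus $R_0^{k}(G)$ is annihilated by $\partial_{\overline{z}}$; being of weight $2k$ it is a meromorphic modular form, so $\xi_{2k}(R_0^{k}(G))=0$ and hence $\Delta_{2-2k}(\mathcal{R})=-\xi_{2k}\xi_{2-2k}(\mathcal{R})=0$. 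The logarithmic singularity of $G$ at $\tau_Q$ (property (4)) shows that the only singularity of $\mathcal{R}$ in $\H\setminus\SL_2(\Z)$ is at $\tau_Q$, while property (5) guarantees decay at the cusps; thus $\mathcal{R}\in\mathcal{H}_{2-2k}^{\operatorname{cusp}}$ with the same pole locus as $\mathcal{G}_Q$.

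The reduction is then as follows. By \eqref{eqn:xiG} we have $\xi_{2-2k}(\mathcal{G}_Q)=f_Q$, so it suffices to prove the key identity $c\,R_0^{k}(G)=f_Q$. Granting this, $\mathcal{G}_Q-\mathcal{R}$ lies in the kernel of $\xi_{2-2k}$ and is therefore a meromorphic modular form of weight $2-2k$ whose only possible pole is at $\tau_Q$; matching the principal parts of the two elliptic expansions at $\tau_Q$ (supplied by the computation below) removes this pole, leaving a holomorphic modular form of negative weight $2-2k<0$ that is bounded at the cusps, which must vanish.

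To prove $c\,R_0^{k}(G)=f_Q$ I would pass to the Poincar\'e series representation of the Green's function, writing $G$ as a sum over $\Gamma_{\tau_Q}\backslash\SL_2(\Z)$ of a weight $0$ seed built from the Legendre function $Q_{k-1}$ (the resolvent kernel at spectral parameter $s=k$), and then apply $R_0^{k}$ termwise, the interchange being justified by the absolute and locally uniform convergence established for such series (cf. the proof of Theorem \ref{thm:Gpolar}). The problem thereby reduces to a single seed identity: applying $R_0^{k}$ to the seed evaluated at $\gamma\tau_Q$ produces exactly $D^{\frac{k}{2}}\mathcal{Q}(z,1)^{-k}$ for the corresponding form $\mathcal{Q}\in[Q]$, so that the termwise sum is $f_Q$; the same computation, carried out one raising earlier, identifies the $\sinh$-integral appearing in \eqref{eqn:Gdef} and hence the meromorphic part of $\mathcal{R}$ with that of $\mathcal{G}_Q$. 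I expect this seed identity to be the main obstacle: it is an explicit special-function calculation relating iterated raising of $Q_{k-1}$ to a power of the quadratic form $\mathcal{Q}(z,1)$, and it is here that the precise constant $\frac{1}{2^k\omega_{\tau_Q}(k-1)!}$ must be pinned down, the factors $2^k$, $(k-1)!$, and $\omega_{\tau_Q}$ arising respectively from the $2i$'s in the raising operator, the iterated differentiation, and the normalization of the stabilizer $\Gamma_{\tau_Q}$ in property (4).
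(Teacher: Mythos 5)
Your proposal is correct in its overall logic, but it takes a genuinely different route from the paper. The paper proves the identity by direct computation: it derives an explicit formula, in terms of Gauss's hypergeometric ${_2F_1}$, for the higher Green's function raised (and lowered) in either variable, shows that this formula matches the $\sinh$-integral defining $\mathcal{G}_Q$ in \eqref{eqn:Gdef}, and finishes by writing $Q(z,1)$ in terms of its roots $\tau_Q$ and $\overline{\tau_Q}$; modularity and uniqueness arguments play no role there. You instead argue by characterization: the operator identity $\xi_{2-2k}\left(y^{2k-2}\overline{h}\right)=R_{2k-2}(h)$, the ladder and eigenvalue bookkeeping showing that $R_0^{k}(G)$ is meromorphic of weight $2k$ and that $\mathcal{R}$ is harmonic (I checked these; they are correct, including the eigenvalue $-(2k-2)$ for $R_0^{k-1}(G)$), the known image $\xi_{2-2k}(\mathcal{G}_Q)=f_Q$ from \eqref{eqn:xiG}, and the vanishing of negative-weight holomorphic forms bounded at the cusps. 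What your route buys: the unavoidable special-function work is aimed at a cleaner target --- the seed identity expressing $R_0^{k}$ of the resolvent-kernel seed as a multiple of $\mathcal{Q}(z,1)^{-k}$ is simpler than matching a ${_2F_1}$ against an incomplete-beta-type integral --- and the harmonicity of the right-hand side is obtained structurally rather than by inspection. What it costs: you owe two computations rather than one, and the second is not as cheap as you suggest. For the principal-part matching you cannot work from property (4) of the Green's function alone, since the $O(1)$ error there carries no differentiable information; extracting the singular part of $R_0^{k-1}(G)$ at $\tau_Q$ requires the full local expansion of the resolvent kernel (the Legendre-function seed) near the diagonal, which is essentially the singular half of the paper's $(k-1)$-level computation. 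So the uniqueness argument repackages, rather than eliminates, the computation the paper performs. Two smaller points: $\mathcal{R}$ does not lie in $\mathcal{H}^{\operatorname{cusp}}_{2-2k}$ in the paper's notation, because its $\xi$-image $f_Q$ is a meromorphic cusp form, not a cusp form --- the space you want is $\H_{2-2k}$, forms with poles in the upper half-plane that are bounded at the cusps; and the seed identity itself, which you rightly flag as the crux and which pins down the constant $\frac{1}{2^k\omega_{\tau_Q}(k-1)!}$, is left unproven --- it is true and provable exactly as you outline, so I count it as deferred rather than as a gap, on par with the computations the paper's own sketch defers.
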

\begin{proof}[Sketch of proof of Proposition \ref{prop:GQGreens}]
One begins by directly computing a formula for the higher Green's functions raised and lowered in either variable 
(see also page 24 of \cite{Mellit}).  This formula is written in terms of Gauss's hypergeometric ${_2F_1}$, which we then show essentially matches the integral appearing in $\mathcal{G}_Q$.  Finally, we write $Q(z,1)$ in terms of its roots $\tau_Q$ and $\overline{\tau_Q}$ to finish comparing the two sides.

\end{proof}

Theorem \ref{thm:innersimple} together with Proposition \ref{prop:GQGreens} yields a connection between inner products and higher Green's functions.  However, if we replace the form $f$ from Theorem \ref{thm:innersimple} with another $f_{\mathcal{Q}}$, we get an even more direct relation between these two objects.
\begin{theorem}\label{thm:innerGreens}
For $Q\in\mathcal{Q}_{-D_1}$ and $\mathcal{Q}\in Q_{-D_2}$ ($-D_1,-D_2<0$ discriminants) with $[\tau_Q]\neq [\tau_{\mathcal{Q}}]$, we have
$$
\left<f_{\mathcal{Q}},f_{Q}\right>=-\frac{\pi(-4)^{1-k}}{(2k-1)\beta(k,k)} \frac{G_k^{\H/\SL_2(\Z)}\!\left(\tau_{\mathcal{Q}},\tau_Q\right)}{\omega_{\tau_{\mathcal{Q}}}\omega_{\tau_Q}}.
$$
\end{theorem}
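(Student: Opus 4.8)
The plan is to combine the inner-product formula of Theorem \ref{thm:innersimple} with the identification of $\mathcal{G}_Q$ as a raised higher Green's function from Proposition \ref{prop:GQGreens}. The bridge is the relation $\xi_{2-2k}(\mathcal{G}_Q)=f_Q$ from \eqref{eqn:xiG}, which lets me rewrite $\langle f_{\mathcal{Q}},f_Q\rangle=\langle f_{\mathcal{Q}},\xi_{2-2k}(\mathcal{G}_Q)\rangle$ as a Bruinier--Funke-type pairing. The starting observation is that $f_{\mathcal{Q}}$ is itself a meromorphic cusp form whose only singularity modulo $\SL_2(\Z)$ occurs at $\tau_{\mathcal{Q}}$, where it has a pole of order $k$ coming from the factor $\mathcal{Q}(z,1)^{-k}$, and that by hypothesis $[\tau_{\mathcal{Q}}]\neq[\tau_Q]$. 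Thus $f_{\mathcal{Q}}$ is an admissible input to the pairing against $f_Q$, with its pole disjoint from the singularity of $\mathcal{G}_Q$.

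First I would apply the higher-order-pole version of Theorem \ref{thm:innersimple} (Theorem 1.8 of \cite{BKCycle}). Since $f_{\mathcal{Q}}$ has a single pole of order $k$ at $\tau_{\mathcal{Q}}$, this reduces $\langle f_{\mathcal{Q}},f_Q\rangle$ to a finite sum over $0\le j\le k-1$ of the principal-part (elliptic) coefficients of $f_{\mathcal{Q}}$ at $\tau_{\mathcal{Q}}$ paired against the Taylor coefficients of $\mathcal{G}_Q$ at $\tau_{\mathcal{Q}}$; here $\mathcal{G}_Q$ is smooth at $\tau_{\mathcal{Q}}$ precisely because $[\tau_{\mathcal{Q}}]\neq[\tau_Q]$. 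This is the exact analogue of the Stokes argument sketched for Proposition \ref{expansionprod} and Theorem \ref{thm:innersimple}, with the cusp-form input replaced by $f_{\mathcal{Q}}$, so that the only surviving boundary contribution comes from the single pole $\tau_{\mathcal{Q}}$.

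Next I would make both sides explicit. The elliptic expansion of $f_{\mathcal{Q}}$ at $\tau_{\mathcal{Q}}$ is computed directly from the factorization $\mathcal{Q}(z,1)=a(z-\tau_{\mathcal{Q}})(z-\overline{\tau_{\mathcal{Q}}})$ together with the change of variable to $X_{\tau_{\mathcal{Q}}}(z)$, giving the principal-part coefficients in closed form. On the other side, Proposition \ref{prop:GQGreens} lets me replace $\mathcal{G}_Q$ near $\tau_{\mathcal{Q}}$ by a constant multiple of $y^{2k-2}\overline{R_0^{k-1}(G_k^{\H/\SL_2(\Z)}(z,\tau_Q))}$, so that the required Taylor coefficients become derivatives of the raised Green's function $R_0^{k-1}(G_k^{\H/\SL_2(\Z)}(\cdot,\tau_Q))$ evaluated at $\tau_{\mathcal{Q}}$.

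The final and hardest step is to show that the resulting finite sum collapses to the single evaluation $G_k^{\H/\SL_2(\Z)}(\tau_{\mathcal{Q}},\tau_Q)$, rather than a genuine combination of its derivatives, and to pin down the constant $-\pi(-4)^{1-k}/((2k-1)\beta(k,k))$. The mechanism I expect is that the $k-1$ raisings present in $\mathcal{G}_Q$ are exactly absorbed by pairing against the order-$k$ principal part of $f_{\mathcal{Q}}$: within the elliptic pairing each raising operator is adjoint to one step of the principal-part expansion, so the $k-1$ derivatives of $G_k^{\H/\SL_2(\Z)}$ telescope back to the undifferentiated value. I anticipate verifying this using the eigenfunction property $\Delta_{0}(G_k^{\H/\SL_2(\Z)})=k(1-k)G_k^{\H/\SL_2(\Z)}$ from the characterization in Section \ref{sec:Greens} together with the commutation relations between $R_0$ and the elliptic expansion, with the constant emerging by combining the normalizations from Theorem \ref{thm:innersimple}, Proposition \ref{prop:GQGreens}, and the beta integral $\beta(k,k)$. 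As a consistency check, one may alternatively recognize $f_{\mathcal{Q}}$ as a constant multiple of the elliptic Poincar\'e series $\Psi_{2k,-k}(\cdot,\tau_{\mathcal{Q}})$ and re-derive the same collapse from the operator identities in Theorem \ref{thm:Poincmain}.
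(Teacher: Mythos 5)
Your proposal is correct and follows essentially the same route as the paper: apply the higher-order-pole version of Theorem \ref{thm:innersimple} (Theorem 1.8 of \cite{BKCycle}) to $f_{\mathcal{Q}}$, then use Proposition \ref{prop:GQGreens} together with the raising/eigenfunction telescoping to identify $R_{2-2k}^{k-1}(\mathcal{G}_Q)$ with a constant multiple of $G_k^{\H/\SL_2(\Z)}(\cdot,\tau_Q)$. The only point you overcomplicate is your ``hardest step'': the collapse to a single term is automatic because the principal part of $f_{\mathcal{Q}}$ at $\tau_{\mathcal{Q}}$ is the single monomial proportional to $(z-\overline{\tau_{\mathcal{Q}}})^{-2k}X_{\tau_{\mathcal{Q}}}^{-k}(z)$, so the pairing picks out exactly one coefficient of $\mathcal{G}_Q$, and the remaining identification is precisely the paper's step of applying raising $k-1$ times to Proposition \ref{prop:GQGreens}.
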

\begin{proof}[Sketch of proof of Theorem \ref{thm:innerGreens}]
The first part of the proof closely follows the application of Stokes Theorem in the proof of Theorem \ref{thm:innersimple} (1).  If 
the function $f$ from Theorem \ref{thm:innersimple} 
is chosen to have  principal part  $X_{\mathfrak{z}}^{-n}(z)$ instead of a  simple pole, then the function $\mathcal{G}_Q$ is replaced with a constant multiple of $R_{2-2k}^{n-1}(\mathcal{G}_Q)$.  Applying raising $k-1$ times to Proposition \ref{prop:GQGreens}, in particular, yields a constant multiple of $G_k^{\H/\SL_2(\Z)}$.  Since the pole of $f=f_{\mathcal{Q}}$ occurs at $\tau_{\mathcal{Q}}$, we obtain the higher Green's function evaluated at these CM-points.
\end{proof}
\begin{remark}
By replacing $f_{\mathcal{Q}}$ and $f_Q$ with slightly more general functions $\Psi_{2k,-k}(\cdot,z_j)$, defined in \eqref{eqn:Psidef}, having poles at arbitrary $z_1,z_2\in\H$, one can 
obtain a relation between the inner product and $G_k^{\H/\SL_2(\Z)}(z_1,z_2)$.
\end{remark}
If $k=1$, which is excluded here, Gross and Zagier (see Proposition 2.22 in Section II of \cite{GZ}) related the Green's function at CM-points with the 
infinite part of the height pairing on degree $0$ divisors.  This has been generalized 
to higher $k$ by Zhang \cite{Zhang}, who obtained an identity between the 
infinite part of the height pairing of CM-cycles and the higher Green's functions at CM-points.  Combining Zhang's result with Theorem \ref{thm:innerGreens} immediately yields a relation between the inner product and the height pairing of CM-cycles.  Denoting by $S_k(\tau_Q)$ the CM-cycle associated to $\tau_Q$ 
that was constructed in Section 0.1 of \cite{Zhang}, and using $\left<\cdot,\cdot\right>_h$ to denote the infinite part of the height pairing, the following is an immediate consequence of Theorem \ref{thm:innerGreens} and Proposition 4.1.2 of \cite{Zhang}.   
\begin{corollary}\label{cor:innerheight} 
For $Q\in\mathcal{Q}_{-D_1}$ and $\mathcal{Q}\in Q_{-D_2}$ ($-D_1,-D_2<0$ discriminants) with $[\tau_Q]\neq [\tau_{\mathcal{Q}}]$, we have
$$
\left<f_{\mathcal{Q}},f_{Q}\right>=-\frac{2\pi(-4)^{1-k}}{(2k-1)\beta(k,k)} \frac{\left<S_k\left(\tau_{\mathcal{Q}}\right),S_k\left(\tau_{Q}\right)\right>_h}{\omega_{\tau_{\mathcal{Q}}}\omega_{\tau_Q}}.
$$
\end{corollary}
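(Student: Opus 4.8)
The plan is to derive the corollary as a direct substitution, combining the two ingredients named just before the statement: Theorem \ref{thm:innerGreens}, which already evaluates the regularized inner product $\langle f_{\mathcal{Q}}, f_Q\rangle$ in terms of a higher Green's function, and Zhang's Proposition 4.1.2 of \cite{Zhang}, which identifies that Green's function at the relevant CM-points with the infinite part of a height pairing of CM-cycles. There is no new analytic input required; the entire content is a chain of two equalities together with a careful matching of normalizations.

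First I would recall, under the standing hypotheses $Q \in \mathcal{Q}_{-D_1}$, $\mathcal{Q} \in \mathcal{Q}_{-D_2}$ and $[\tau_Q] \neq [\tau_{\mathcal{Q}}]$, the conclusion of Theorem \ref{thm:innerGreens},
\[
\left<f_{\mathcal{Q}},f_{Q}\right> = -\frac{\pi(-4)^{1-k}}{(2k-1)\beta(k,k)}\,\frac{G_k^{\H/\SL_2(\Z)}\!\left(\tau_{\mathcal{Q}},\tau_Q\right)}{\omega_{\tau_{\mathcal{Q}}}\omega_{\tau_Q}}.
\]
Next I would invoke Zhang's identity for the CM-cycles $S_k(\tau_Q)$ constructed in Section 0.1 of \cite{Zhang}, which in the present normalization asserts that the archimedean height pairing computes the higher Green's function up to a factor of $2$, namely
\[
G_k^{\H/\SL_2(\Z)}\!\left(\tau_{\mathcal{Q}},\tau_Q\right) = 2\left<S_k\left(\tau_{\mathcal{Q}}\right),S_k\left(\tau_{Q}\right)\right>_h.
\]
Substituting this into the preceding display produces exactly the asserted formula, with the additional factor of $2$ absorbed into the constant, and I would conclude the proof there.

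The hard part is not the substitution but the bookkeeping of normalizations that legitimizes it. One must confirm that the higher Green's function $G_k^{\H/\SL_2(\Z)}$ singled out by properties (1)--(5) in Section \ref{sec:Greens} --- in particular the logarithmic singularity $2\omega_{\mathfrak{z}}\log(r_{\mathfrak{z}}(z))$ of property (4) and the vanishing at the cusps of property (5) --- coincides with the Green's function underlying Zhang's height computation, and that the CM-cycle $S_k(\tau_Q)$ and the infinite height pairing $\left<\cdot,\cdot\right>_h$ are taken in the same conventions as in \cite{Zhang}; only then is the proportionality constant pinned down to be $2$ rather than some other explicit scalar. One also checks that the hypothesis $[\tau_Q] \neq [\tau_{\mathcal{Q}}]$ places the two CM-points in distinct $\SL_2(\Z)$-classes, so that the Green's function is finite at $(\tau_{\mathcal{Q}}, \tau_Q)$ and the two CM-cycles are disjoint, making both sides of Zhang's identity well defined. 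Once these conventions are reconciled, the corollary follows immediately.
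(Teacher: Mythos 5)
Your proposal is correct and matches the paper's own treatment: the corollary is stated there as an immediate consequence of Theorem \ref{thm:innerGreens} combined with Proposition 4.1.2 of \cite{Zhang}, which is precisely your substitution $G_k^{\H/\SL_2(\Z)}(\tau_{\mathcal{Q}},\tau_Q)=2\left<S_k(\tau_{\mathcal{Q}}),S_k(\tau_Q)\right>_h$, and your factor of $2$ is exactly what reconciles the constants in the theorem and the corollary. Your added care about matching Zhang's normalizations and about $[\tau_Q]\neq[\tau_{\mathcal{Q}}]$ guaranteeing finiteness is sensible bookkeeping but introduces nothing beyond what the paper implicitly assumes.
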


\section{Future directions}\label{sec:future}
We conclude the paper by discussing some possible future directions that research involving polar harmonic Maass forms could take.  

\noindent
\begin{enumerate}[leftmargin=*]
\item
It would be interesting to look for a meromorphic cusp form or polar harmonic Maass form which encodes combinatorial information and investigate the application of Theorem \ref{Fouriercoefficients} to this form.  
\item 
Another natural function to study is $1/E_2$.  Although it is neither modular nor quasi-modular, its connection to the quasi-modular Eisenstein series $E_2$ leads one to ask what its Fourier expansion looks like and whether it belongs to a related space. Sebbar and Sebbar \cite{SebbarSebbar} investigated this function from another perspective.
\item   In Theorem \ref{thm:thetalift} there is a technical restriction because for $n\geq k$ the naive definition of $\Phi_{n-k}^*$ has poles at every CM-point in $\H$.  Since $\mathcal{F}_{2k,Q}=f_Q$ is also a theta lift (using the theta kernel $\Phi_k$), it would be interesting to see if the other functions $\sum_{\mathcal{Q}\in[Q]} \mathcal{F}_{n,Q}$ are theta lifts for all $n\in\N$.   Steffen L\"obrich is currently investigating this question.

\item Since the Poincar\'e series $\mathbb{P}_{2-2k,n,N}$ are preimages of the functions $\Psi_{2k,-n-1,N}$ by Theorem \ref{thm:Poincmain}, following the proof of Theorem \ref{thm:innersimple}, one could relate general inner products of meromorphic cusp forms to the functions $\mathbb{P}_{2-2k,n,N}$ evaluated at points in $\H$.
\item
 Recall the connection between the generating function of Zagier's $f_{k,\delta}\ (\delta>0)$ and cycle integrals, which may be considered real quadratic traces.  One may wonder whether there is a connection between CM-traces and the generating function of the $f_{k,-D}$.  The naive generating function diverges, but it would be interesting to find a natural regularization.
\item In Conjecture 4.4 of \cite{GZ}, Gross and Zagier take linear combinations of $G_{k}^{\H/\SL_2(\Z)}$ acted on by Hecke operators and conjecture that these linear combinations evaluated at CM-points are essentially logarithms of algebraic numbers whenever the linear combinations satisfy certain relations.  These relations are determined by linear equations defined by the Fourier coefficients of weight $2k$ cusp forms.  
Given the connection between $G_{k}^{\H/\SL_2(\Z)}(z,\tau_Q)$ and the weight $2-2k$ polar harmonic Maass form $\mathcal{G}_Q(z)$ from Proposition \ref{prop:GQGreens}, it might be interesting to understand their condition in the language of polar harmonic Maass forms.  On the one hand, it might carve out a natural subspace of weight $2k$ meromorphic modular forms (corresponding to the image of those forms satisfying these conditions) which may satisfy other interesting properties.  On the other side, by applying the theory of harmonic Maass forms, one may be able to loosen the conditions and investigate what happens for general linear combinations.  
\end{enumerate}


\begin{thebibliography}{99}

\bibitem{AGOR} C. Alfes, M. Griffin, K. Ono, and L. Rolen, \begin{it}Weierstrass Mock Modular forms and elliptic curves\end{it}, Research in Number Theory \textbf{1:24} (2015), 1--31.
\bibitem{Andrews}G. Andrews, \begin{it}On the theorems of Watson and Dragonette for Ramanujan's mock theta functions\end{it}, Amer. J. Math. \textbf{88} (1966), 454--490.
\bibitem{Bengoechea} P. Bengoechea, \begin{it}Corps quadratiques et formes modulaires,\end{it} Ph.D. thesis, 2013.
\bibitem{Bi}P. Bialek, \begin{it}Ramanujan's formulas for the coefficients in the power series expansions of certain modular forms\end{it}, Ph. D. thesis, University of Illinois at Urbana--Champaign, 1995.
\bibitem{BeBiYe} B. Berndt, P. Bialek, and A. Yee, \begin{it}Formulas of Ramanujan for the power series coefficients of certain quotients of Eisenstein series\end{it}, Int. Math. Res. Not. \textbf{2002} (2002), 1077--1109.
\bibitem{Bo1}R. Borcherds, \begin{it}Automorphic forms with singularities on Grassmannians\end{it}, Inv. Math. \textbf{132} (1998), 491--562. 
\bibitem{BJK} K. Bringmann, P. Jenkins, and B. Kane, \begin{it}Meromorphic modular forms and the $D^{k-1}$-operator\end{it}, in preparation.
\bibitem{BKRamanujan} K. Bringmann and B. Kane, \begin{it} Ramanujan and coefficients of meromorphic modular forms\end{it}, J. Math. Pures Appl., accepted for publication.
\bibitem{BKWeight0} K. Bringmann and B. Kane, \begin{it}A problem of Petersson about weight $0$ meromorphic modular forms\end{it}, submitted for publication.
\bibitem{BKFourier} K. Bringmann and B. Kane, \begin{it}Fourier coefficients of meromorphic modular forms and a question of Petersson\end{it}, submitted for publication.
\bibitem{BKW} K. Bringmann, B. Kane,  W. Kohnen, \begin{it}Locally harmonic Maass forms and the kernel of the Shintani lift\end{it}, Int. Math. Res. Not. \textbf{2015} (2015), 3185--3224.
\bibitem{BKCycle} K. Bringmann, B. Kane, and A. von Pippich, \begin{it}Cycle integrals of meromorphic modular forms and CM-values of automorphic forms\end{it}, in preparation.
\bibitem{BKM}K. Bringmann, B. Kane, and M. Viazovska, \begin{it}Theta lifts and local Maass forms\end{it}, Math. Res. Lett. \textbf{20} (2013), 213--234.
\bibitem{BOMockTheta} K. Bringmann and K. Ono, \begin{it}The $f(q)$ mock theta function conjecture and partition ranks\end{it}, Invent. Math. \textbf{165} (2006), 243--266.
\bibitem{BO2} K. Bringmann and K. Ono, \begin{it}Coefficients of harmonic Maass forms\end{it}, Proceedings of the 2008 University of Florida Conference on Partitions, $q$-series, and modular forms.
\bibitem{Bruinier} J. Bruinier, \begin{it}Borcherds products on $O(2,l)$ and Chern classes of Heegner divisors\end{it} Lecture Notes in Math. \textbf{1780}, 2002.
\bibitem{BF} J. Bruinier and J. Funke, \begin{it}On two geometric theta lifts\end{it}, Duke Math. J. \textbf{125} (2004), no.\ 1, 45--90.
\bibitem{BruinierOno} J. Bruinier and K. Ono, \begin{it}Algebraic formulas for the coefficients of half-integral weight harmonic weak Maass forms\end{it}, Adv. Math. \textbf{246} (2013), 198--219.
\bibitem{BOR}J. Bruinier, K. Ono, and R. Rhoades, \begin{it}Differential operators for harmonic weak Maass forms and the vanishing of Hecke eigenvalues\end{it}, Math. Ann. \textbf{342} (2008), 673--693.
\bibitem{Dragonette}L. Dragonette, \begin{it}Some asymptotic formulae for the mock theta series of Ramanujan\end{it}, Trans. Amer. Math. Soc. \textbf{72} (1952), 474--500.
\bibitem{Fay} J. Fay, \begin{it}Fourier coefficients of the resolvent for a Fuchsian group\end{it}, J. reine und angew. Math. \begin{bf}293--294\end{bf} (1977), 143--203.
\bibitem{GZ} B. Gross and D. Zagier, \begin{it}Heegner points and derivatives of $L$-series\end{it}, Invent. Math. \textbf{84} (1986), 225--320.
 \bibitem{HR1} G.  Hardy and S. Ramanujan, \begin{it}Une formule asymptotique pour le nombre des partitions de $n$\end{it},  Collected papers of Srinivasa Ramanujan,  239--241, AMS Chelsea Publ., Providence, RI, 2000.
\bibitem{HR2} G. Hardy and S. Ramanujan, \begin{it}Asymptotic formulae in combinatory analysis\end{it}, Proc. London Math. Soc. \textbf{16} (1917), in Collected papers of Srinivasa Ramanujan, AMS Chelsea Publ, Providence, RI, 2000, 244.
\bibitem{HR3} G. Hardy and S. Ramanujan, \begin{it}On the coefficients in the expansions of certain modular functions\end{it}, Proc. Royal Soc. A \textbf{95} (1918), 144--155.
\bibitem{HM} J. Harvey and G. Moore, \begin{it} Algebras, BPS states, and strings\end{it}, Nuclear Phys. B \textbf{463} (1996), 315--368.
\bibitem{Hejhal}D. Hejhal, \begin{it}The Selberg trace formula for $\operatorname{PSL}(2,\R)$ (Volume 2)\end{it} Lecture Notes in Math. \textbf{1001}, 1983. 
\bibitem{Hoevel} M. H\"ovel, \begin{it}Automorphe Formen mit Singularitaten auf dem hyperbolischen Raum\end{it}, Ph.D. thesis, 2012.
\bibitem{HondaKaneko}Y. Honda and M. Kaneko, \begin{it}On Fourier coefficients of some meromorphic modular forms\end{it}, Bull. Korean Math. Soc. \textbf{49} (2012), 1349--1357.
\bibitem{ImOs}\"O. Imamo$\overline{\text{g}}$lu and C. O'Sullivan, \begin{it}Parabolic, hyperbolic, and elliptic Poincar\'e series\end{it}, Acta Arith. \textbf{139} (2009), 199--228.
\bibitem{Knopp}M. Knopp, \begin{it}Construction of automorphic forms on $H$-Groups and supplementary Fourier series\end{it}, Trans. Amer. Math. Soc. \begin{bf}103\end{bf} (1962), 168--188.  Corrigendum:  Trans. Amer. Math. Soc. \begin{bf}106\end{bf} (1963), 341--345.
\bibitem{KohnenFourier} W. Kohnen, \begin{it}Fourier coefficients of modular forms of half-integral weight\end{it}, Math. Ann. \textbf{271} (1985), 237--268.
\bibitem{KohnenZagier} W. Kohnen and D. Zagier, \begin{it}Values of $L$-series of modular forms at the center of the critical strip,\end{it} Invent. Math. \textbf{64} (1981), 175--198.
\bibitem{Kramer}D. Kramer, \begin{it}Applications of Gauss's theory of reduced binary quadratic forms to zeta functions and modular forms,\end{it} Ph.D. thesis, University of Maryland, 1983.
\bibitem{Masri}R. Masri, \begin{it}Fourier coefficients of harmonic weak Maass forms and the partition function\end{it}, Amer. J. Math. \textbf{137} (2015), 1061--1097.
\bibitem{Mellit} A. Mellit, \begin{it}Higher Green's functions for modular forms\end{it}, Ph. D. thesis, Rheinische Friedrich-Wilhelms-Universit\"at Bonn, 2008.
\bibitem{Niebur} D. Niebur, \begin{it}A class of nonanalytic automorphic functions\end{it}, Nagoya Math. J. \textbf{52} (1973), 133--145.
\bibitem{Pe3} H. Petersson, \begin{it}Einheitliche Begr\"undung der Vollst\"andigkeitss\"atze f\"ur die Poincar\'eschen Reihen von reeler Dimension bei beliebigen Grenzkreisgruppen von erster Art\end{it}, Abh. Math. Sem. Hansischen Univ. \textbf{14} (1941), 22--60.
\bibitem{Pe1} H. Petersson, \begin{it}Konstruktion der Modulformen und der zu gewissen Grenzkreisgruppen geh\"origen automorphen Formen von positiver reeller Dimension und die vollst\"andige Bestimmung ihrer Fourierkoeffzienten\end{it}, S.-B. Heidelberger Akad. Wiss. Math. Nat. Kl. (1950), 415--474.
\bibitem{Pe2} H. Petersson, \begin{it}\"Uber automorphe Orthogonalfunktionen und die Konstruktion der automorphen Formen von positiver reeller Dimension\end{it}, Math. Ann. \textbf{127} (1954), 33--81.
\bibitem{PoincareSeries}H. Poincar\'e, \begin{it}Fonctions modulaires et fonctions fuchsiennes,\end{it} Oeuvres \textbf{2} (1911), 592--618.
\bibitem{RaLost} S. Ramanujan, \begin{it}The lost notebook and other unpublished paper\end{it}, Narosa, New Delhi, 1988.
\bibitem{Rad} H. Rademacher, {\it On the expansion of the partition function in a series}, Ann. of Math. \textbf{44} (1943), 416--422.
\bibitem{RZ} H. Rademacher and H.  Zuckerman,
{\it On the Fourier coefficients of certain modular forms of positive dimension},  Ann. of Math.   \textbf{39} (1938),  433--462.
\bibitem{SebbarSebbar} A. Sebbar and A. Sebbar, \begin{it}Equivariant functions and integrals of elliptic functions\end{it}, Geom. Dedicata \textbf{160} (2012), 373--414.
\bibitem{ZagierRQ} D. Zagier, \begin{it}Modular forms associated to real quadratic fields,\end{it} Invent. Math. \begin{bf}30\end{bf} (1975), 1--46.
\bibitem{ZagierBourbaki} D. Zagier, \begin{it}Ramanujan's mock theta functions and their applications [d'apr\'es Zwegers and Bringmann--Ono]\end{it}, S\'eminaire Bourbaki, Ast\'erisque \textbf{326} (2009), 143--164.
\bibitem{Zap} J.M. Zapata Rolon, \begin{it} On the Fourier coefficients of meromorphic modular forms of higher level\end{it}, in preparation. 
\bibitem{Zemel} S. Zemel, \begin{it}Regularized pairings of meromorphic modular forms and theta lifts\end{it}, J. Number Theory \textbf{162} (2016), 275--311.
\bibitem{Zhang} S. Zhang, \begin{it}Heights of Heegner cycles and derivatives of $L$-series\end{it}, Invent. Math. \textbf{130} (1997), 99-152.
\bibitem{Zu1} H.  Zuckerman, {\it On the coefficients of certain modular forms belonging to subgroups of the modular group}, Trans. Amer. Math. Soc. \textbf{45} (1939), 298--321.
\bibitem{Zu2} H.  Zuckerman, {\it On the expansions of certain modular forms of positive dimension}, Amer. J. Math. \textbf{62} (1940),  127--152.
\bibitem{Zwegers}S. Zwegers, \begin{it}Mock theta functions\end{it}, Ph.D. thesis, Utrecht Universiteit, 2002.


\end{thebibliography}
\end{document}